\documentclass[reqno]{amsart}
\usepackage{amssymb,dsfont,graphicx,amscd,color,amsmath,amsfonts,amssymb,geometry,mathtools,paralist}
\usepackage[initials]{amsrefs}

\newtheorem{theorem}{Theorem}[section]
\newtheorem{proposition}[theorem]{Proposition}
\newtheorem{corollary}[theorem]{Corollary}
\newtheorem{lemma}[theorem]{Lemma}
\theoremstyle{remark}

\newtheorem{remark}[theorem]{Remark}

\theoremstyle{definition}
\newtheorem{definition}[theorem]{Definition}
\newtheorem{question}[theorem]{Question}
\DeclareMathOperator*{\SZ}{\mathcal{SZ}}

\newcommand{\R}{\mathbb R}

\newcommand{\C}{\mathbb C}

\DeclareMathOperator{\dom}{D}

\newcommand{\jones}{\mathcal{J}(\mathbb R)}
\DeclareMathOperator{\es}{\mathcal{ES}(\mathbb R)}

\DeclareMathOperator{\pes}{\mathcal{PES}(\mathbb R)}


\author[A. Bartoszewicz \and S. G\l \c ab \and D. Pellegrino \and J. B. Seoane-Sep\'{u}lveda]{Artur Bartoszewicz\textsuperscript{*} \and Szymon G\l \c ab\textsuperscript{*} \and Daniel Pellegrino\and Juan B. Seoane-Sep\'{u}lveda\textsuperscript{**}}
\address{Institute of Mathematics, \newline\indent Technical University of \L\'od\'z, \newline\indent W\'olcza\'nska 215, \newline\indent 93-005 \L\'od\'z, Poland}
\email {arturbar@p.lodz.pl}
\address{Institute of Mathematics, \newline\indent Technical University of \L\'od\'z, \newline\indent W\'olcza\'nska 215, \newline\indent 93-005 \L\'od\'z, Poland}
\email {szymon.glab@p.lodz.pl}
\address{Departamento de Matem\'{a}tica, \newline\indent Universidade Federal da Para\'{\i}ba, \newline\indent 58.051-900 - Jo\~{a}o Pessoa, Brazil.}\email{pellegrino@pq.cnpq.br}
\address{Departamento de An\'{a}lisis Matem\'{a}tico,\newline\indent Facultad de Ciencias Matem\'{a}ticas, \newline\indent Plaza de Ciencias 3, \newline\indent Universidad Complutense de Madrid,\newline\indent Madrid, 28040, Spain.}
\email{jseoane@mat.ucm.es}

\thanks{\textsuperscript{*} Supported by the Polish Ministry of Science and Higher Education Grant No.  N N201 414939 (2010-2013).}
\thanks{\textsuperscript{**} Supported by the Spanish Ministry of Science and Innovation, grant MTM2009-07848.}

\title[Algebrability, non-linear properties, and special functions]{Algebrability, non-linear properties, and special functions}
\subjclass[2010]{Primary: 15A03; Secondary: 28A20, 46J10}
\keywords{lineability; spaceability; algebrability; perfectly everywhere surjective functions; Sierpi\'nski-Zygmund functions}
\date{}
\begin{document}
\begin{abstract}
We construct uncountably generated algebras inside the following sets of \emph{special} functions:
\begin{inparaenum}
\item Sierpi\'nski-Zygmund functions.
\item Perfectly everywhere surjective functions.
\item Nowhere continuous Darboux functions.
\end{inparaenum}
All conclusions obtained in this paper are improvements of some already known results.
\end{abstract}
\maketitle
\section{Preliminaries and background}
This paper is a contribution to the very recent trend, in mathematical analysis, of the search for large algebraic structures (linear spaces or algebras) enjoying what one could call ``{\em special}'' properties. As it has become a usual notion nowadays, given a certain property we say that the subset $M$ of a topological vector space $X$ which satisfies it is \emph{$\mu$-lineable} (respectively, \emph{$\mu$-spaceable}) if $M \cup \{0\}$ contains a vector space (respectively, closed vector space) of dimension $\mu$ (finite or infinite). If $M$ contains an infinite-dimensional (closed) vector space, it shall be simply called \emph{lineable} (\emph{respectively} spaceable) for short. These notions of lineability and algebrability were coined by V. I. Gurariy in the early 2000's and first introduced in \cite{AGS,juanksu}.

In the recent years many examples of vector spaces of functions on $\mathbb{R}$ or $\mathbb{C}$ enjoying certain special properties have been constructed. Although these latter notions of lineability and spaceability did not appear until recently, the origins of this theory date back to 1966, when a famous example due to V. I. Gurariy (\cite{G1,G2}) showed that there exists an infinite dimensional linear space every non-zero element of which is a continuous nowhere differentiable function on $\mathcal{C}[0,1]$. (This result was later improved, and even Banach spaces of such functions were constructed.) More recently, many authors got interested in this subject and gave a wide range of examples. For instance, in  \cite{AGS} it was shown that the set of everywhere surjective functions in $\mathbb{R}$ is $2^\mathfrak{c}$-lineable (where $\mathfrak{c}$ denotes the cardinality of $\mathbb R$) and that the set of differentiable functions on $\mathbb R$ which are nowhere monotone is lineable in $\mathcal C(\mathbb R)$. These behaviors occur, sometimes, in particularly interesting ways. For example, in \cite{H}, Hencl showed that any separable Banach space is isometrically isomorphic to a subspace of $\mathcal C[0,1]$ whose non-zero elements are nowhere approximately differentiable and nowhere H{\"o}lder. We refer the interested reader to \cite{AGM,topology,BQ,B,BDP,BG2,BGP_LAA,BMP,GMPS_LAA,GMS_add,GGMS_MN,MPPS_LAA} for a wider range of results in this topic of lineability and spaceability.

Of course, one could go further and not just consider linear spaces but, instead, {\em larger} or {\em more complex} structures. For instance, in \cite{ACPS} the authors showed that there exists and uncountably generated algebra every non-zero element of which is an everywhere surjective function on $\mathbb{C}$, and in \cite{APS} it was shown that, if $E\subset\mathbb T$ is a set of measure zero, and if $\mathcal F(\mathbb T)$ denotes the subset of $\mathcal C(\mathbb T)$ of continuous functions whose Fourier series expansion diverges at every point of $E$, then $\mathcal F(\mathbb T)$ contains an infinitely generated and dense subalgebra. These previous examples motivated the notion of {\em algebrability}. This notion was first introduced in \cite{APS,AS} and here we give an slightly simplified version of it, that shall be sufficient for our purposes.
\begin{definition}[\cite{APS,AS}]
Let $\mathcal{L}$ be an algebra. A set $A \subset \mathcal{L}$ is said to be $\beta$-algebrable if there exists an algebra
$\mathcal{B}$ so that $\mathcal{B} \subset A \cup \{ 0\}$ and card$(Z) = \beta$, where  $\beta$ is a cardinal number and $Z$ is a
minimal system of generators of $\mathcal{B}$. Here, by {\em $Z = \{z_{\alpha}: \alpha \in \Lambda\}$ is a minimal system of generators
of $\mathcal{B},$} we mean that $\mathcal{B} = \mathcal{A}(Z)$ is the algebra generated by $Z,$ and for every $\alpha_0 \in
\Lambda,$\; $z_{\alpha_0} \notin \mathcal{A}(Z \setminus \{z_{\alpha_0}\}).$  We also say that $A$ is algebrable if $A$ is $\beta$-algebrable for $\beta$ infinite.
\end{definition}
\begin{remark}
\begin{enumerate}
\item Notice that if $Z$ is a minimal infinite system of generators of $\mathcal{B},$ then $\mathcal{A}(Z')\neq \mathcal{B}$ for any $Z'\subset \mathcal{B}$ such that card$(Z')<$ card$(Z)$. The result is not true for finite systems of generators:
Take  $X =\mathbb{C}^2$ with coordinate-wise multiplication. $X$ is a Banach algebra with unit $(1,1)$. The set $\{(1,0),(0,1)\}$ is a minimal system of generators of $X$. However, $X$ is also single generated by $u=(1,i)$: Consider $P:X \to X, P(s,t) = (s^2,t^2)$. Note that $P(u) = (1,-1)$ and so we get $\frac{1}{1+i}(u - P(u)) = (0,1) \in X.$ Similarly, we also have $(1,0) \in X.$
\item Of course, notice that algebrability implies lineability, and the converse (in general) does not hold. For instance (and as it was proved in \cite{Taiwan}), given any unbounded interval $I$, the set of Riemann-integrable functions on $I$ that are not Lebesgue-integrable is lineable and not algebrable.
\end{enumerate}
\end{remark}

There also exists an strengthening of the notion of algebrability (see \cite{BG1}). Given a cardinality $\kappa$, we say that $A$ is a $\kappa$-generated free algebra if there exists a subset $X = \{x_{\alpha} : \alpha < \kappa\}$ of $A$ such that any function $f$ from $X$ to some algebra $A'$ can be uniquely extended to a homomorphism from $A$ into $A'$. Then $X$ is called a set of free generators of the algebra $A$. A subset $X = \{x_{\alpha} : \alpha < \kappa\}$ of a commutative algebra $B$ generates a free sub-algebra $A$ if and only if for each polynomial $P$ and any $x_{\alpha_1}, x_{\alpha_2}, \ldots, x_{\alpha_n}$ we have $P(x_{\alpha_1}, x_{\alpha_2}, \ldots, x_{\alpha_n}) = 0$ if and only if $P=0$. Also, let us recall that $X = \{x_{\alpha}: \alpha < \kappa\} \subset E$ is a set of free generators of a free algebra $A \subset E$ if and only if the set $\widehat{X}$ of elements of the form $x_{\alpha_1}^{k_1} x_{\alpha_2}^{k_2} \cdots x_{\alpha_n}^{k_n}$ is linearly independent and all linear combinations of elements from $\widehat{X}$ are in $E \cup \{0\}$. With this at hand, let us recall the following definition (see \cite{BG1}).
\begin{definition}
We say that a subset $E$ of a commutative linear algebra $B$ is strongly $\kappa$-algebrable if there exists a $\kappa$-generated free algebra $A$ contained in $E \cup \{0\}$. 
\end{definition}

In general, there are subsets of linear algebras which are algebrable but not strongly algebrable (for instance, $c_{00}$ is algebrable in $c_0$ but it is not strongly $1$-algebrable, as it is shown in \cite{BG1}).

\noindent This paper is mainly devoted to the thorough study of the following recently considered classes of functions:
\begin{itemize}
\item Sierpi\'nski-Zygmund functions.
\item Perfectly everywhere surjective functions.
\item Nowhere continuous Darboux functions (nowhere continuous functions mapping connected sets to connected sets).
\end{itemize}

These latter classes have been lately considered in \cite{G,GMSS,GMS_add,monthly}, where the authors proved the existence of infinite dimensional linear spaces inside each of the previous classes. In particular, in \cite{GMSS}*{Theorem 5.6} it was proved that the set of Sierpi\'nski-Zygmund functions is $\mathfrak{c}^+$-lineable. In \cite{GMSS}*{Theorem 2.6} the authors showed the $2^\mathfrak{c}$-lineability of the set of perfectly everywhere surjective functions, and in \cite{GMS_add,monthly} similar results were obtained on the class of nowhere continuous functions mapping connected sets to connected sets. The aim of this paper is to improve all the previously mentioned results by obtaining algebrability and strong algebrability where just lineability was previously reached. In particular, and among other results, we shall prove that there exists an uncountably generated algebra every non-zero element of which is a nowhere continuous Darboux function (Theorem \ref{3th}) and that both, the sets of Sierpi\'nski-Zygmund functions (Theorem \ref{SZth}) and the set of perfectly everywhere surjective functions (Theorem \ref{PESth}), are (respectively) strongly $\mathfrak{c}^+$-algebrable and strongly $2^{\mathfrak{c}}$-algebrable.

We shall use standard set theoretical notation. As usual, $\omega$ shall denote the cardinality of $\mathbb{N}$ and $\mathfrak{c}$ shall denote the continuum. Also, we shall identify each cardinal number with the first ordinal of the same cardinality. Then a cardinal $\kappa$ is equal to the set of all ordinals less than $\kappa$, denoted $\kappa=\{\xi:\xi<\kappa\}$. The rest of the notation shall be rather usual. Classical Real Analysis and Abstract Algebra techniques together with Set Theory and Cardinal Theory shall be used throughout the paper.

\section{Sierpi\'nski-Zygmund functions}
This section shall be devoted to a type of function that first appeared in the early 1920's (see \cite{SZ}) and due to W. Sierpi\'nski and A. Zygmund. A clear consequence of the classic Luzin's Theorem is that, for every measurable function \(f\colon\mathbb R\to\mathbb R\), there exists a measurable set \(S\subset\mathbb R\), of infinite measure, such that \(f\vert_S\) is continuous. On the other hand, and given any arbitrary function \(f\colon\mathbb R\to\mathbb R\), is it possible to find a dense set \(S\subset\mathbb R\) such that \(f\vert_S\) is continuous? The answer to this question was given by Blumberg (\cite{Blumberg}, 1922):

\begin{theorem}[Blumberg]
Let \(f\colon\mathbb R\to\mathbb R\) be an arbitrary function. There exists a dense subset \(S\subset\mathbb R\) such that the function \(f\vert_S\) is continuous.
\end{theorem}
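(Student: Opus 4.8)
The plan is to build a countable set $S\subseteq\mathbb R$ that is dense and on which $f|_S$ is continuous; a countable dense set already proves the theorem, and countability lets me control the construction by a recursion. The whole argument is driven by the Baire Category Theorem through the following oscillation lemma, which I would establish first: \emph{for every nonempty open interval $I$ and every $\varepsilon>0$ there are a nonempty open subinterval $I'\subseteq I$ and a set dense in $I'$ on which $f$ has oscillation less than $\varepsilon$.} To prove it, cover the range by countably many open intervals $J_1,J_2,\dots$ each of length $<\varepsilon$; then $I=\bigcup_i\bigl(f^{-1}(J_i)\cap I\bigr)$, so the closed sets $\overline{f^{-1}(J_i)\cap I}$ cover $I$. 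Since $I$ is completely metrizable, Baire's theorem forbids all of them from being nowhere dense, so one has nonempty interior $I'$; as any set is dense in the interior of its own closure, $f^{-1}(J_i)\cap I$ is dense in $I'$, and there $f$ takes values in a set of diameter $<\varepsilon$.

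Next I would iterate the lemma along a tree of intervals indexed by $\seq$. Writing $D_\sigma=f^{-1}(J_\sigma)\cap I_\sigma$ for the ``good set'' attached to a node, the decisive device is to refine the range together with the domain: to pass from level $n$ to level $n+1$ inside a node $\sigma$, split the (now bounded) interval $J_\sigma$ into finitely many pieces of length $<1/(n+1)$, cover $I_\sigma$ by the corresponding closed preimages, and apply Baire once more to obtain a child interval $I_{\sigma'}$ on which $f$ is trapped in a single piece $J_{\sigma'}\subseteq J_\sigma$. This keeps the good sets nested, $D_{\sigma'}\subseteq D_\sigma$, each dense in its interval, with oscillation $<1/|\sigma|$. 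I would simultaneously arrange that the intervals chosen at each level have union dense in $\mathbb R$, so that the tree ``fills up'' the line.

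It then remains to extract $S$. The guiding principle for continuity is transparent: if a point $s\in S$ lies in a level-$n$ good set $D_\sigma$ and every point of $S$ lying in $I_\sigma$ also lies in $D_\sigma$, then all of them take $f$-values inside $J_\sigma$, so $f|_S$ oscillates by less than $1/n$ on the neighbourhood $I_\sigma$ of $s$; letting $n\to\infty$ yields continuity. Thus I want $S$ to be dense and to consist of points each of which belongs to the good set at arbitrarily deep levels, i.e. points lying in the intersection of the good sets along a full branch.

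The hard part, which I expect to be the technical heart of the argument, is exactly this extraction. The sets $D_\sigma$ are only dense in their intervals and need not be residual, so a point obtained merely as the shrinking limit of the $I_\sigma$ along a branch will typically be a \emph{bad} point, lying in $\bigcap_n I_{\sigma_n}$ but in none of the $D_{\sigma_n}$; I therefore cannot simply intersect the levels. Instead I would run a bookkeeping recursion over the basic intervals that interleaves refinement with the choice of the points of $S$: while descending the tree inside a basic interval $B$, at each level I would choose a representative and then refine so that this already-chosen point survives into the next good set, using the freedom in the oscillation lemma to pick the child interval and range-piece around the representative's value. The delicate issue is to verify that this freedom is genuinely available often enough — equivalently, that a dense set of points is ``good at every scale'' — while keeping all the density requirements compatible as infinitely many basic intervals are processed. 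Once this is secured, the neighbourhood estimate above gives continuity of $f|_S$ at every point of the resulting dense set $S$.
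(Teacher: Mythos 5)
You should first be aware that the paper itself does not prove Blumberg's theorem; it states it as background and cites Kharazishvili's book for the proof, so what follows compares your proposal with the classical argument given there. The first half of your proposal is sound: the oscillation lemma is correct, the nested tree of intervals with shrinking range windows can indeed be built exactly as you describe (finite pigeonhole inside each node), and your continuity criterion for the extracted set is the right one. The gap is precisely at the point you flag as the ``technical heart,'' and the remedy you sketch would fail. The ``freedom in the oscillation lemma'' that you invoke does not exist: the Baire/pigeonhole argument only tells you that \emph{some} range piece $J_i$ has a preimage dense in \emph{some} subinterval; it gives you no control over which piece wins (in particular, not necessarily the piece containing $f(s)$ for your already-chosen representative $s$), nor over where the dense subinterval sits (not necessarily containing, or even accumulating at, $s$). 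What your interleaved recursion actually requires is that the representative $s$ satisfy: for every $\varepsilon>0$ the set $f^{-1}\bigl((f(s)-\varepsilon,f(s)+\varepsilon)\bigr)$ is dense in some neighbourhood of $s$. Points with this property do form a large set, but nothing guarantees they can be found inside your sets $D_\sigma$: those sets are merely \emph{dense} in their intervals, the set of ``survivable'' points is (at best) comeager, and a dense set can be disjoint from a comeager set. So, with density as the only largeness notion, the extraction step cannot be completed, and no amount of bookkeeping fixes this.

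The classical proof closes exactly this gap by replacing density with category throughout. Two ingredients are needed, both absent from your proposal. First, the Banach category theorem (for any $A\subseteq\mathbb R$, the set of points of $A$ at which $A$ is locally of first category is of first category) implies that the set $G$ of \emph{good points} --- points $x$ such that for every $\varepsilon>0$ there is a neighbourhood $U$ of $x$ in which $f^{-1}\bigl((f(x)-\varepsilon,f(x)+\varepsilon)\bigr)$ is of second category in every subinterval --- is comeager; note that this goodness in particular yields the survival property above, since a set of second category in every subinterval of $U$ is dense in $U$. Second, the tree must be built with ``$f^{-1}(J_\sigma)\cap I_\sigma$ of second category in every subinterval of $I_\sigma$'' in place of ``dense in $I_\sigma$'' (the same pigeonhole works, again via Banach's theorem, since a nonmeager set is of second category everywhere in some subinterval). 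Then each $D_\sigma$ is nonmeager, and a nonmeager set must meet the comeager set $G$; hence representatives can always be chosen in $D_\sigma\cap G$, and their goodness is exactly what lets you steer the next refinement, near the representative, into range pieces around its value --- including making the children's windows shrink toward $f(s)$ as one approaches $s$, which is what full continuity (not just oscillation $<1/n$ at scale $n$) requires. In short: your architecture is the standard one, but the decisive ideas --- good points via the Banach category theorem, and the upgrade from ``dense'' to ``everywhere second category'' so that good points can be found inside the $D_\sigma$ --- are missing, and without them the step you yourself identify as the heart of the proof genuinely fails.
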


Checking the proof of Blumberg's theorem (see, \cite{Kharazishvili}*{p.\ 154}) it can be noticed that the set $S$ above is countable. Naturally, we could wonder whether we can choose the set \(S\) in Blumberg's theorem to be uncountable. This was partially answered (in the negative) by W. Sierpi\'nski and A. Zygmund a year later in \cite{SZ} (see also \cite{Kharazishvili}*{pp. 165-166} for a modern reference).

\begin{theorem}[Sierpi\'{n}ski-Zygmund]
There exists a function \(f\colon\mathbb R\to\mathbb R\) such that, for any set \(Z\subset\mathbb R\) of cardinality the continuum, the restriction \(f\vert_Z\) is not a Borel map (and, in particular, not continuous).
\end{theorem}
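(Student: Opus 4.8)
The plan is to construct $f$ by transfinite recursion, diagonalizing against every Borel partial function. Two preliminary facts drive the argument. First, the family $\mathcal{B}$ of all Borel-measurable functions $g\colon D_g\to\mathbb{R}$, where $D_g$ ranges over Borel subsets of $\mathbb{R}$, has cardinality exactly $\mathfrak{c}$: there are only $\mathfrak{c}$ Borel subsets of $\mathbb{R}$, and on a fixed Borel domain each Borel function is determined by a countable code over a countable basis, so the total count is $\mathfrak{c}\cdot\mathfrak{c}^{\aleph_0}=\mathfrak{c}$. Second, by Kuratowski's extension theorem, every Borel-measurable function defined on an \emph{arbitrary} subset $Z\subset\mathbb{R}$ extends to a Borel function defined on some Borel (indeed $G_\delta$) superset of $Z$. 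This second fact is what makes the whole scheme feasible, and I expect it to be the main conceptual obstacle: a priori one would have to defeat the restrictions $f\vert_Z$ for all $2^{\mathfrak{c}}$ continuum-sized sets $Z$, but the extension theorem lets us instead defeat only the $\mathfrak{c}$ members of $\mathcal{B}$, since any Borel restriction $f\vert_Z$ must agree on $Z$ with some $g\in\mathcal{B}$.

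With these in hand, I would enumerate $\mathbb{R}=\{x_\alpha:\alpha<\mathfrak{c}\}$ and $\mathcal{B}=\{g_\alpha:\alpha<\mathfrak{c}\}$, and define $f(x_\alpha)$ by recursion on $\alpha$, choosing any real value
\[
f(x_\alpha)\notin\{\,g_\xi(x_\alpha):\xi\le\alpha\ \text{and}\ x_\alpha\in D_{g_\xi}\,\}.
\]
The forbidden set has cardinality at most $\card(\alpha+1)<\mathfrak{c}$, whereas $\mathbb{R}$ has cardinality $\mathfrak{c}$, so such a value always exists and the recursion goes through, yielding a total function $f\colon\mathbb{R}\to\mathbb{R}$.

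Finally I would verify the desired property. Suppose, for contradiction, that some $Z\subset\mathbb{R}$ with $\card(Z)=\mathfrak{c}$ has $f\vert_Z$ Borel. By the extension theorem there is $g_\xi\in\mathcal{B}$ with $Z\subseteq D_{g_\xi}$ and $f\vert_Z=g_\xi\vert_Z$. Since only $\card(\xi)<\mathfrak{c}$ of the points $x_\beta$ satisfy $\beta\le\xi$, the set $\{\alpha>\xi:x_\alpha\in Z\}$ still has cardinality $\mathfrak{c}$ and in particular is nonempty; pick any such $\alpha$. On one hand $f(x_\alpha)=g_\xi(x_\alpha)$ because $x_\alpha\in Z$; on the other hand the construction at stage $\alpha$ forced $f(x_\alpha)\neq g_\xi(x_\alpha)$, since $\xi\le\alpha$ and $x_\alpha\in Z\subseteq D_{g_\xi}$. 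This contradiction shows that $f\vert_Z$ fails to be Borel for every continuum-sized $Z$, which is the assertion. Since every continuous function is Borel, $f\vert_Z$ is in particular not continuous, giving the parenthetical claim.
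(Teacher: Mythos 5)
Your proof is correct, and it is essentially the classical Sierpi\'nski--Zygmund argument that the paper itself does not reproduce but merely cites (Kharazishvili, pp.\ 165--166): transfinite diagonalization against the $\mathfrak{c}$-many Borel functions on Borel domains, with Kuratowski's extension theorem reducing the $2^{\mathfrak{c}}$-many candidate sets $Z$ to those $\mathfrak{c}$-many opponents. The cardinality count and the final verification (choosing $\alpha>\xi$ with $x_\alpha\in Z$) are both sound, so there is nothing to add.
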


Notice that, if the Continuum Hypothesis holds, the restriction of this function to any uncountable set is not continuous. The Continuum Hypothesis is necessary in this frame. Shinoda (\cite{shinoda}) showed that if both Martin's Axiom and the negation of the Continuum Hypothesis hold, then for every \(f\colon\mathbb R\to\mathbb R\) there exists an uncountable set \(Z\subset\mathbb R\) such that \(f\vert_Z\) is continuous. Following the notation from \cite{GMSS}, we say that a function \(f\colon\mathbb R\to\mathbb R\) is a \emph{Sierpi\'nski-Zygmund function} if it satisfies the condition in Sierpi\'nski-Zygmund's Theorem and the set of these functions is denoted by
$$\SZ(\mathbb R)=\{\,f\colon\mathbb R\to\mathbb R \,:\, f\text{ is a Sierpi\'nski-Zygmund function}\,\}.$$

In \cite{GMSS}*{Theorems 5.6 and 5.10} it was proved that the set $\SZ(\mathbb R)$ is $\mathfrak{c}^+$-lineable and, also, $\mathfrak{c}$-algebrable. As a consequence, assuming that $\mathfrak{c}^+ = 2^{\mathfrak{c}}$ (which follows, for instance, from the Generalized Continuum Hypothesis or GCH), \(\SZ(\mathbb R)\) is also \(2^\mathfrak{c}\)-lineable. Up to today it was not known whether any additional set-theoretical assumptions is needed or not in order to show the $2^\mathfrak{c}$-lineability of $\SZ(\mathbb R)$. In Theorem \ref{SZth} we shall improve this latter results from \cite{GMSS} by providing a new technique that shall show that the set of all Sierpi\'nski-Zygmund functions is actually $\kappa$-algebrable for some $\mathfrak{c}^+ \le \kappa \le 2^{\mathfrak{c}}$.

Let us recall that, besides its intrinsic mathematical interest, this class of Sierpi\'nski-Zygmund functions has attracted the interest of many mathematicians, especially in the area of Real Analysis, since its appearance at the beginning of the 20th century (see, e.g. \cite{X1,X2,X3,X4}). In \cite{X3} it was shown that there exists a Darboux function that is also in $\SZ(\mathbb{R})$ and that there exists a model of ZFC in which there are no such functions.

\noindent Let us begin by giving certain results that shall be needed throughout this section.

\begin{lemma}\label{l1}
Let $P$ be a non-zero polynomial in $n+1$ variables. Then the set $$\{x : \forall y \in \R^n, \quad P(x,y)=0\}$$ is finite.
\end{lemma}

\begin{proof}
Suppose that the set $\{x:\forall y\in\R^n\;\;P(x,y)=0\}$ is infinite. Let $y\in\R^n$. Then the polynomial $x\mapsto P(x,y)$ in one variable has infinitely many zeros, and therefore it vanishes for every $x$. Hence $P\equiv 0$.
\end{proof}

\begin{lemma}\label{l2}
Let $\mathcal{P}$ be a family of polynomials of cardinality less than $\mathfrak{c}$. Then there exists a set $Y=\{y_\xi:\xi<\mathfrak{c}\}$ such that $P(y_{\xi_1},y_{\xi_2},\ldots,y_{\xi_n})\neq 0$ for any $n\geq 1$, any polynomial $P\in\mathcal{P}$ in $n$ variables and any distinct ordinals $\xi_i<\mathfrak{c}$.
\end{lemma}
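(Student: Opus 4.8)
The plan is to construct $Y=\{y_\xi:\xi<\mathfrak{c}\}$ by transfinite recursion, choosing the reals $y_\xi$ one at a time so that no (nonzero) polynomial from $\mathcal{P}$ is ever forced to vanish on a tuple of distinct indices. The naive attempt --- at stage $\xi$ pick $y_\xi$ off the roots of each one-variable polynomial obtained by freezing the previously chosen coordinates --- runs into the real obstacle: substituting already-chosen values into all but one slot of $P$ could in principle produce the \emph{identically zero} polynomial, in which case no choice of $y_\xi$ survives. To forestall this I would carry a \emph{strengthened induction hypothesis}: at stage $\xi$, for every $P\in\mathcal{P}$ in $n$ variables, every $1\le m\le n$, and every assignment of $m$ distinct ordinals $\eta_1,\dots,\eta_m<\xi$ to $m$ of the coordinates of $P$, the polynomial in the remaining $n-m$ variables obtained by substituting $y_{\eta_1},\dots,y_{\eta_m}$ is \emph{not identically zero}. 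The case $m=n$ is precisely the desired conclusion (a nonzero ``polynomial in zero variables'' is a nonzero constant), while the cases $m<n$ supply exactly the bookkeeping that keeps the recursion alive.

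Next I would carry out the recursion step. At stage $\xi$, having chosen $\{y_\eta:\eta<\xi\}$, I must pick $y_\xi$ so that every partial substitution that \emph{uses} the index $\xi$ still yields a nonzero polynomial. Fix $P\in\mathcal{P}$, a coordinate $j$ for $\xi$, and distinct ordinals $\eta_1,\dots,\eta_{m-1}<\xi$ for $m-1$ of the other coordinates. Applying the induction hypothesis to the substitution of the $\eta_i$ alone, the resulting polynomial $P'$ --- which still contains the $j$-th variable --- is nonzero; writing its $j$-th variable as $x$ and the remaining ones as $y$, Lemma \ref{l1} shows that $\{x_0:\ \forall y,\ P'(x_0,y)=0\}$ is \emph{finite}. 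Hence only finitely many values of $y_\xi$ could violate this single constraint. Then I would count the constraints: there are at most $|\mathcal{P}|$ polynomials, each offering finitely many coordinate-choices and at most $|\xi|^{<\omega}$ index-assignments, a product of cardinals each strictly below $\mathfrak{c}$, hence fewer than $\mathfrak{c}$ constraints in all. The union of the corresponding finite exceptional sets therefore has cardinality $<\mathfrak{c}$, and since $|\R|=\mathfrak{c}$ a legitimate $y_\xi$ exists; choosing it preserves the invariant.

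At limit stages there is nothing to check beyond the observation that any substitution involves only finitely many indices, so the invariant at a limit is simply the union of the invariants below it. Once all $y_\xi$ with $\xi<\mathfrak{c}$ are defined, given distinct ordinals $\xi_1,\dots,\xi_n$ with maximum $\xi^{*}$, the full ($m=n$) instance of the invariant at stage $\xi^{*}+1$ delivers $P(y_{\xi_1},\dots,y_{\xi_n})\neq 0$, as required. The main obstacle is exactly the ``identically zero after substitution'' phenomenon, and the entire design --- tracking partial substitutions in the hypothesis precisely so that Lemma \ref{l1} can cap the forbidden values by a \emph{finite} set at each of the $<\mathfrak{c}$ constraints --- exists to defuse it. (As an aside, the recursion can be bypassed altogether: letting $K$ be the subfield of $\R$ generated by all coefficients occurring in $\mathcal{P}$, one has $|K|<\mathfrak{c}$, so $\R$ has transcendence degree $\mathfrak{c}$ over $K$; any transcendence basis $Y$ of $\R$ over $K$ then works, since distinct $y_{\xi_i}$ are algebraically independent over $K$ and each nonzero $P\in\mathcal{P}$ has coefficients in $K$.)
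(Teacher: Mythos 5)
Your main argument is, in essence, the paper's own proof. The paper runs the same transfinite recursion with the same strengthened invariant: its inductive condition $(1)$ is your $m=n$ case (full substitutions give nonzero values), its condition $(2)$ is your $m<n$ case (partial substitutions are not identically zero), and its recursion step likewise combines Lemma \ref{l1} with the count $\max\{\omega,|\xi|,|\mathcal{P}|\}<\mathfrak{c}$ of forbidden values; the only cosmetic difference is that the paper first closes $\mathcal{P}$ under permutations of variables so that it only ever substitutes into the leading coordinates, whereas you allow an arbitrary coordinate $j$.

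Two remarks. First, a small omission: the paper additionally requires $y_\xi\notin\{y_\eta:\eta<\xi\}$, so that $\xi\mapsto y_\xi$ is injective and $Y$ genuinely has cardinality $\mathfrak{c}$. Your recursion as written could repeat values (if, say, no polynomial in $\mathcal{P}$ has any real zero, nothing prevents choosing the same real at every stage), and injectivity is exactly what the application needs: Theorem \ref{SZth} uses that $f_{\zeta_1}(x_\alpha),\ldots,f_{\zeta_n}(x_\alpha)$ are \emph{distinct} points of $Y_\alpha$. The fix costs nothing --- also exclude the fewer than $\mathfrak{c}$ previously chosen reals at each stage. Second, your parenthetical aside is a genuinely different and slicker route that the paper does not take: a transcendence basis of $\R$ over the subfield $K$ generated by the coefficients of the polynomials in $\mathcal{P}$ (which exists in cardinality $\mathfrak{c}$, since $|K|<\mathfrak{c}$ forces $\R$ to have transcendence degree $\mathfrak{c}$ over $K$) satisfies the lemma outright, gives injectivity for free, and bypasses both the recursion and Lemma \ref{l1}; it also yields Lemma \ref{l3} directly, by adjoining $X$ to $K$ before taking the basis.
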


\begin{proof}
Let $\lambda=|\mathcal{P}|$.
By $\mathcal{P}_n$ we denote the set of polynomials from $\mathcal{P}$ in $n$ variables.
Without loss of generality we may assume that $\mathcal{P}$ is closed under taking permutations of variables, i.e. for any $n$, $P\in\mathcal{P}_n$ and any permutation $\sigma\in S_n$, the polynomial $Q$ defined by
$$
Q(x_1,x_2,\ldots,x_n)=P(x_{\sigma(1)},x_{\sigma(2)},\ldots,x_{\sigma(n)})
$$
is in $\mathcal{P}$.

We define $\{y_\xi:\xi<\mathfrak{c}\}$ by induction. Suppose that we have already defined  $Y_\alpha=\{y_\xi:\xi<\alpha\}$ such that
\begin{itemize}
\item[$(1)$] $\forall n<\min\{\omega,\alpha+1\}\forall P\in\mathcal{P}_n\forall \xi_1<\xi_2<\ldots<\xi_n<\alpha (P(y_{\xi_1},y_{\xi_2},\ldots,y_{\xi_n})\neq 0)$
\item[$(2)$] $\forall n\forall m<\min\{n,\alpha+1\}\forall P\in\mathcal{P}_n\forall \xi_1<\xi_2<\ldots<\xi_m<\alpha\exists y\in\R^{n-m}(P(y_{\xi_1},y_{\xi_2},\ldots,y_{\xi_m},y)\neq 0)$.
\end{itemize}
We find $y_\alpha\notin Y_\alpha$ such that
\begin{itemize}
\item[$(1')$] $\forall n<\min\{\omega,\alpha+1\}\forall P\in\mathcal{P}_{n+1}\forall \xi_1<\xi_2<\ldots<\xi_n<\alpha (P(y_{\xi_1},y_{\xi_2},\ldots,y_{\xi_n},y_\alpha)\neq 0)$
\item[$(2')$] $\forall n\forall m<\min\{n,\alpha+1\}\forall P\in\mathcal{P}_{n+1}\forall \xi_1<\xi_2<\ldots<\xi_m<\alpha\exists y\in\R^{n-m}(P(y_{\xi_1},y_{\xi_2},\ldots,y_{\xi_m},y_\alpha,y)\neq 0)$.
\end{itemize}

Note that, by $(2)$, for a given $P\in\mathcal{P}_{n+1}$ and $\xi_1<\ldots<\xi_n<\alpha$ the set $\{x:P(y_{\xi_1},\ldots,y_{\xi_n},x)=0\}$ is finite. Therefore the set
$$
W:=\{x:\exists n<\min\{\omega,\alpha+1\}\exists P\in\mathcal{P}_{n+1}\exists\xi_1<\ldots<\xi_n<\alpha(P(y_{\xi_1},\ldots,y_{\xi_n},x))=0\}
$$
is of cardinality less than $\max\{\omega,\alpha,\lambda\}<\mathfrak{c}$.

Now, by Lemma \ref{l1} and $(2)$, we have that for a given $P\in\mathcal{P}_{n+1}$, $m<\min\{n,\alpha+1\}$ and $\xi_1<\ldots<\xi_m<\alpha$ the set $\{x:P(y_{\xi_1},\ldots,y_{\xi_m},x,y)=0 $ for all $y\in\R^{n-m}\}$ is finite. Therefore the set
$$
X:=\{x:\exists n\exists m<\min\{n,\alpha+1\}\exists P\in\mathcal{P}_{n+1}\exists\xi_1<\ldots<\xi_n<\alpha
$$
$$
\forall y\in\R^{n-m}(P(y_{\xi_1},\ldots,y_{\xi_n},x,y))=0\}
$$
is of cardinality less than $\max\{\omega,\alpha,\lambda\}<\mathfrak{c}$.
Since $Y_\alpha$ is of cardinality less than $\mathfrak{c}$ we can find $y_\alpha$ which is not in $Y_\alpha\cup W\cup X$. This finishes the inductive proof.
\end{proof}

\begin{lemma}\label{l3}
Let $\mathcal{P}$ be a family of non-zero real polynomials with no constant term and let $X$ be a subset of $\R$ both of cardinality less than $\mathfrak{c}$. Then there exists set $Y=\{y_\xi:\xi<\mathfrak{c}\}$ such that $P(y_{\xi_1},y_{\xi_2},\ldots,y_{\xi_n})\notin X$ for any $n$, any polynomial $P\in\mathcal{P}$ and any distinct ordinals $\xi_i<\mathfrak{c}$.
\end{lemma}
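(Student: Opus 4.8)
The plan is to deduce this from Lemma~\ref{l2} by a translation trick: avoiding every value of $X$ for the family $\mathcal{P}$ is the same as avoiding the single value $0$ for the enlarged, translated family
$$\mathcal{P}' = \{\, P - x \;:\; P \in \mathcal{P},\ x \in X \,\}.$$
Here, for $P\in\mathcal{P}$ in $n$ variables and $x\in X$, the polynomial $P-x$ is again a polynomial in $n$ variables, and the requirement $P(y_{\xi_1},\ldots,y_{\xi_n})\notin X$ is literally the conjunction, over all $x\in X$, of the requirements $(P-x)(y_{\xi_1},\ldots,y_{\xi_n})\neq 0$. So the whole conclusion of Lemma~\ref{l3} for $\mathcal{P}$ is exactly the conclusion of Lemma~\ref{l2} for $\mathcal{P}'$.

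Next I would verify that $\mathcal{P}'$ meets the standing hypotheses under which Lemma~\ref{l2} is applied. For the cardinality, $|\mathcal{P}'| \le |\mathcal{P}|\cdot|X| \le \max(\omega,|\mathcal{P}|,|X|) < \mathfrak{c}$, using that the product of two cardinals below the infinite cardinal $\mathfrak{c}$ stays below $\mathfrak{c}$. The one genuine point—and the only place the hypotheses of the lemma are really used—is that every member $P-x$ of $\mathcal{P}'$ must be a \emph{non-zero} polynomial, as is implicitly required for the conclusion of Lemma~\ref{l2} (it passes through Lemma~\ref{l1}, which needs $P$ non-zero). This is precisely where the assumption that each $P\in\mathcal{P}$ has no constant term enters: a non-zero polynomial with no constant term is non-constant, hence $P-x$ is non-constant and in particular not the zero polynomial, for every $x\in X$. (Without this hypothesis a constant polynomial $P\equiv x\in X$ would force $P(y_{\xi_1},\ldots,y_{\xi_n})=x\in X$ always, so the statement would be false; thus the assumption is sharp and is exactly what makes the reduction legitimate.)

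Finally I would apply Lemma~\ref{l2} to $\mathcal{P}'$ to obtain a set $Y=\{y_\xi : \xi<\mathfrak{c}\}$ with $Q(y_{\xi_1},\ldots,y_{\xi_n})\neq 0$ for every $Q\in\mathcal{P}'$ and all distinct ordinals $\xi_i<\mathfrak{c}$. Reading this off for $Q=P-x$ as $x$ ranges over $X$ yields $P(y_{\xi_1},\ldots,y_{\xi_n})\neq x$ for every $x\in X$, i.e. $P(y_{\xi_1},\ldots,y_{\xi_n})\notin X$, which is the desired conclusion. I expect no serious obstacle beyond setting up the translated family correctly and checking the non-degeneracy of its members; the real combinatorial content has already been carried by Lemma~\ref{l2}.
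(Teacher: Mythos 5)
Your proof is correct and is essentially identical to the paper's own argument: the paper also forms the translated family $\mathcal{P}'=\{P-x:P\in\mathcal{P},\,x\in X\}$ and invokes Lemma~\ref{l2}. Your write-up simply makes explicit the two checks the paper leaves implicit (the cardinality bound $|\mathcal{P}'|<\mathfrak{c}$ and the fact that the no-constant-term hypothesis guarantees each $P-x$ is non-zero), both of which are handled correctly.
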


\begin{proof}
Consider the family of polynomials $\mathcal{P}'=\{P-x:P\in\mathcal{P},x\in X\}$. Then using Lemma \ref{l2} for $\mathcal{P'}$ we easily obtain the assertion.
\end{proof}

Before stating and proving the main result of this section, let us recall the set theoretical notion of {\em almost disjoint sets}. Although this notion admits several variants depending on the framework we are working on, given $A$ and $B$ any two subsets of $\mathbb{R}$ of cardinality $\mathfrak{c}$ we shall say that $A$ and $B$ are almost disjoint if $|A \cap B| < \mathfrak{c}$  (see, e.g. \cite{almostdisjoint1}*{Section 1}). Let us recall that the existence of $2^{\mathfrak{c}}$ almost disjoint subsets of the real line each of them of cardinality $\mathfrak{c}$ follows, for instance, from Martin's Axiom (MA) and CH (see, e.g. \cite{K}). It is also known that, in general (see \cite{almostdisjoint1}), there exists an almost disjoint family of cardinality $\kappa^+$ on $\kappa$.

Now, with this at hand (and in the usual framework of ZFC) it is time to state and prove the main result of this section.

\begin{theorem}\label{SZth}
The set of Sierpi\'nski-Zygmund functions is strongly $\kappa$-algebrable, provided there exists a family of $\kappa$ almost disjoint subsets of $\mathfrak{c}$.
\end{theorem}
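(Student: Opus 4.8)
The plan is to produce functions $\{f_\alpha:\alpha<\kappa\}$ that freely generate an algebra inside $\SZ(\R)\cup\{0\}$. I would first reduce the Sierpi\'nski--Zygmund condition to a workable form: a function $g$ lies in $\SZ(\R)$ if and only if for every Borel $\varphi\colon\R\to\R$ the coincidence set $\{x:g(x)=\varphi(x)\}$ has cardinality $<\mathfrak c$ (one direction uses that a Borel map defined on a set of size $\mathfrak c$ extends to a Borel map on all of $\R$). Since there are exactly $\mathfrak c$ Borel functions, fix enumerations $\R=\{x_\xi:\xi<\mathfrak c\}$ and $\{\varphi_\xi:\xi<\mathfrak c\}$ of the Borel functions. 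By the criterion for free generators recalled before the theorem, it then suffices to build the $f_\alpha$ so that (i) for every nonzero real polynomial $P$ with no constant term and all $\alpha_1<\dots<\alpha_n$, the function $P(f_{\alpha_1},\dots,f_{\alpha_n})$ agrees with each $\varphi_\zeta$ on a set of size $<\mathfrak c$; taking $\varphi_\zeta\equiv 0$ then simultaneously gives (ii), that the monomials in the $f_\alpha$ are linearly independent, so $\widehat X$ is independent and the $f_\alpha$ are free generators.

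Next I would turn the almost disjoint family into the combinatorial skeleton. Writing the given sets as $\{A_\alpha:\alpha<\kappa\}$ with each $A_\alpha\subseteq\mathfrak c$ of size $\mathfrak c$, let $e_\alpha\colon\mathfrak c\to A_\alpha$ be the increasing enumeration and read $e_\alpha(\xi)$ as the \emph{slot} that generator $\alpha$ occupies at the point $x_\xi$. The point of almost disjointness is that for $\alpha\neq\beta$ the set $\{\xi:e_\alpha(\xi)=e_\beta(\xi)\}$ injects into $A_\alpha\cap A_\beta$ and so has size $<\mathfrak c$; hence for every finite $F=\{\alpha_1,\dots,\alpha_n\}$ the separation set $S_F=\{\xi:e_{\alpha_1}(\xi),\dots,e_{\alpha_n}(\xi)\text{ are pairwise distinct}\}$ is co-$<\mathfrak c$. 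I would define the functions so that $f_\alpha(x_\xi)$ depends only on the slot $e_\alpha(\xi)$ and on the stage $\xi$; then on $S_F$ the values $f_{\alpha_1}(x_\xi),\dots,f_{\alpha_n}(x_\xi)$ are pairwise distinct, which by Lemma \ref{l1} is precisely the configuration on which a nonzero polynomial without constant term cannot vanish.

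The values themselves I would install by transfinite recursion on $\xi<\mathfrak c$. At stage $\xi$ the forbidden set is $B_\xi=\{\varphi_\zeta(x_\xi):\zeta\le\xi\}$, of size $<\mathfrak c$; I would apply Lemma \ref{l3} (the version avoiding a small set, for polynomials with no constant term) to assign, to the slots active at $x_\xi$, mutually generic values that simultaneously avoid $B_\xi$, and set $f_\alpha(x_\xi)$ equal to the value of slot $e_\alpha(\xi)$. Because these values are distinct and generic on $S_F$, every nonzero $P$ without constant term satisfies $P(f_{\alpha_1}(x_\xi),\dots,f_{\alpha_n}(x_\xi))\notin B_\xi$ for all $\xi\in S_F$ beyond the stages where the relevant data have appeared; so for each fixed $\varphi_\zeta$ the coincidence set is contained in a set of size $<\mathfrak c$, giving (i), and with $\varphi_\zeta\equiv 0$ also (ii). This produces a $\kappa$-generated free subalgebra of $\SZ(\R)\cup\{0\}$.

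The main obstacle is genuinely one of cardinality and uniformity, and it is exactly where the hypothesis enters. There are only $\mathfrak c$ reals but possibly $\kappa=2^{\mathfrak c}$ generators, so the $f_\alpha$ cannot be made independent at any single point; the almost disjoint family is what lets every finite subfamily be separated on a co-$<\mathfrak c$ set of points, which is the most one can hope for. The second delicacy is that one must defeat every real linear combination \emph{at once}: there are up to $2^{\mathfrak c}$ of them, far too many to list along a recursion of length $\mathfrak c$, so this cannot be handled by enumeration but only through the genericity supplied by Lemmas \ref{l1}--\ref{l3}. Distinctness of the slot values forces non-vanishing for \emph{all} coefficient choices simultaneously, and arranging the stage-$\xi$ values to be generic over the field generated by the data already used together with $\{\varphi_\zeta(x_\xi):\zeta\le\xi\}$ is what upgrades ``distinct and generic'' to ``different from every Borel function off a set of size $<\mathfrak c$.'' Keeping enough transcendence room available at each of the $\mathfrak c$ stages is the bookkeeping that must be carried out with care.
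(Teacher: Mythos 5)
Your skeleton is essentially the paper's own: reduce the Sierpi\'nski--Zygmund property to ``every coincidence set with a Borel function has cardinality $<\mathfrak c$,'' use the increasing enumerations of the almost disjoint sets as slots so that each finite set of generators takes pairwise distinct slot values outside a set of stages of size $<\mathfrak c$, and fill the slots by a length-$\mathfrak c$ recursion using Lemma \ref{l3} against the forbidden Borel values $B_\xi$. (Your formulation of the separation sets $S_F$ as co-$<\mathfrak c$ is in fact slightly more careful than the paper's ``disjoint above $\xi$,'' which tacitly assumes boundedness of a set of size $<\mathfrak c$ in $\mathfrak c$.) However, your final paragraph contains a genuine error at the load-bearing point. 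You assert that enumeration is unavailable and that ``distinctness of the slot values forces non-vanishing for all coefficient choices simultaneously,'' with genericity over a field doing the rest. This is false, and not merely unproven: no assignment of stage-$\xi$ values whatsoever can satisfy $P(\text{values})\notin B_\xi$ for \emph{every} nonzero real polynomial $P$ without constant term. Indeed, if $y\neq 0$ is any assigned value and $b\in B_\xi$ is nonzero, then $P(u)=(b/y)\,u$ is a nonzero polynomial without constant term and $P(y)=b\in B_\xi$; if $0\in B_\xi$, take $P(u)=u^2-yu$ instead. Algebraic independence over a field $K$ only controls polynomials with coefficients in $K$, while here the coefficients range over all of $\R$, which cannot lie in any field over which your (real) values are transcendental. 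The same problem appears formally as a violation of the hypotheses of Lemma \ref{l3}: it requires the polynomial family to have cardinality $<\mathfrak c$, whereas the family of all nonzero real polynomials without constant term has cardinality exactly $\mathfrak c$.

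The repair is precisely the ingredient your write-up rejects: enumerate the nonzero polynomials without constant term as $\{P_\beta:\beta<\mathfrak c\}$ --- there are only $\mathfrak c$ of them; your count of $2^{\mathfrak c}$ conflates the coefficient patterns (only $\mathfrak c$ many) with the choice of finite tuples of generators, and the latter is already handled uniformly by the slot structure --- and at stage $\xi$ feed into Lemma \ref{l3} only the family $\{P_\beta:\beta\le\xi\}$ together with $B_\xi$. Then for a fixed $P_\beta$, a fixed Borel $\varphi_\zeta$, and a fixed finite tuple of generators with separation set $S_F$, the coincidence set of $P_\beta(f_{\alpha_1},\dots,f_{\alpha_n})$ with $\varphi_\zeta$ is contained in $\{x_\xi:\xi<\max(\beta,\zeta)\}\cup\{x_\xi:\xi\notin S_F\}$, a set of size $<\mathfrak c$: the stages before $\max(\beta,\zeta)$, where nothing is guaranteed, are simply absorbed into the exceptional set rather than defeated. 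Taking $\varphi_\zeta\equiv 0$ gives freeness, exactly as you intended. With this single change your argument closes and coincides with the paper's proof of Theorem \ref{SZth}.
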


\begin{proof}
Let $\{g_\alpha:\alpha<\mathfrak{c}\}$ be a well-ordering of all Borel functions and write $\R=\{x_\alpha:\alpha<\mathfrak{c}\}$. Finally, let $\{P_\alpha:\alpha<\mathfrak{c}\}$ denote the set of all non-zero polynomials without constant term. Define, inductively, $\{Y_\alpha:\alpha<\mathfrak{c}\}$ a family of subsets of $\R$ each of which has cardinality $\mathfrak{c}$. At the stage $\alpha$ we use Lemma \ref{l3} for $X:=\{g_\lambda(x_\alpha):\lambda\leq\alpha\}$ and $\mathcal{P}:=\{P_\beta:\beta\leq\alpha\}$ to define $Y_\alpha$.  Next, let $Y_\alpha=\{y_\xi^\alpha:\xi<\mathfrak{c}\}$ and define $\mathcal{Y}=\prod_{\alpha<\mathfrak{c}}Y_\alpha$. Let $\{N_\zeta:\zeta<\kappa\}$ be a set of almost disjoint subsets of $\mathfrak{c}$ each of cardinality $\mathfrak{c}$. For any $\zeta<\kappa$ let $\{\zeta(\xi):\xi<\mathfrak{c}\}$ be an increasing enumeration of $N_\zeta$ and define $f_\zeta:\R\to\R$ by $f_\zeta(x_\alpha)=y^\alpha_{\zeta(\alpha)}$. It suffices to show that $\{f_\zeta:\zeta<\kappa\}$ is a set of free generators of an algebra consisting of Sierpi\'nski-Zygmund functions.. Let $\zeta_1<\zeta_2<\ldots<\zeta_n<\kappa$, $P_\beta$ be a polynomial in $n$ variables, $g_\gamma$ be a Borel function and $Z$ be any subset of $\R$ of cardinality $\mathfrak{c}$. Since $\{N_\zeta:\zeta<\kappa\}$ is an almost disjoint family of $\mathfrak{c}$, there is $\xi<\mathfrak{c}$ such that $N_{\zeta_1},N_{\zeta_2},\ldots,N_{\zeta_n}$ are disjoint above $\xi$ (i.e. $N_{\zeta_i}\cap(\xi,\mathfrak{c})$ are pairwise disjoint, where $(\xi,\mathfrak{c})$ is a set consisting of all ordinals between $\xi$ and $\mathfrak{c}$). Since $Z$ is of cardinality $\mathfrak{c}$, there is $\alpha<\mathfrak{c}$ with $\alpha>\max\{\beta,\gamma,\xi\}$ and $x_\alpha\in Z$. Since $\alpha$ is greater than $\xi$, then $f_{\zeta_1}(x_{\alpha}),f_{\zeta_2}(x_{\alpha}),\ldots,f_{\zeta_n}(x_{\alpha})$ are distinct points of $Y_\alpha$. Since $\alpha$ is greater than $\beta$ and $\gamma$, by construction $P_\beta(f_{\zeta_1},f_{\zeta_2},\ldots,f_{\zeta_n})$ differs from $g_{\gamma}$ at the point $x_\alpha\in Z$. Therefore $P_\beta(f_{\zeta_1},f_{\zeta_2},\ldots,f_{\zeta_n})$ is a Sierpi\'nski-Zygmund function and the result follows.
\end{proof}

Of course, we have the following consequences.

\begin{corollary}
If one of the following set-theoretical assumption holds 
\begin{itemize}
\item Martin's Axiom, or 
\item CH or,
\item $\mathfrak{c}^+=2^\mathfrak{c}$,
\end{itemize}
then the set of Sierpi\'nski-Zygmund functions is $2^{\mathfrak{c}}$-algebrable.
\end{corollary}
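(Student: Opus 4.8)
The plan is to deduce the corollary directly from Theorem \ref{SZth}, so the entire argument reduces to a purely set-theoretic existence statement. Concretely, it suffices to check that under each of the three hypotheses there is an \emph{almost disjoint} family of $2^{\mathfrak{c}}$ subsets of $\mathfrak{c}$, each of cardinality $\mathfrak{c}$ (that is, $2^{\mathfrak{c}}$ subsets of size $\mathfrak{c}$ with pairwise intersections of cardinality $<\mathfrak{c}$). Once such a family is produced, Theorem \ref{SZth} applied with $\kappa=2^{\mathfrak{c}}$ gives that $\SZ(\mathbb{R})$ is strongly $2^{\mathfrak{c}}$-algebrable; since a $2^{\mathfrak{c}}$-generated free algebra has its free generators as a minimal system of generators, strong $2^{\mathfrak{c}}$-algebrability implies $2^{\mathfrak{c}}$-algebrability, which is the conclusion. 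The three hypotheses thus enter \emph{only} through the existence of the almost disjoint family.

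I would rely on two tools. The first is the ZFC fact recalled just before Theorem \ref{SZth}: on any infinite cardinal $\kappa$ there is an almost disjoint family of cardinality $\kappa^+$ whose members have size $\kappa$. The second is a tree construction: if $2^{<\mathfrak{c}}=\mathfrak{c}$, then one identifies $\mathfrak{c}$ with the tree $2^{<\mathfrak{c}}$ of binary sequences of length $<\mathfrak{c}$ and assigns to each branch $x\in 2^{\mathfrak{c}}$ the set $A_x=\{x\restriction\alpha:\alpha<\mathfrak{c}\}$ of its initial segments. This yields $2^{\mathfrak{c}}$ sets, each of size $\mathfrak{c}$, and if $x\neq y$ first split at some $\alpha_0<\mathfrak{c}$ then $A_x\cap A_y$ consists of the $|\alpha_0+1|<\mathfrak{c}$ common initial segments, so the family is almost disjoint.

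With these in hand the three cases are quick. If $\mathfrak{c}^+=2^{\mathfrak{c}}$, the first tool with $\kappa=\mathfrak{c}$ already gives an almost disjoint family of size $\mathfrak{c}^+=2^{\mathfrak{c}}$, with no further cardinal arithmetic needed. For the other two cases I would verify the hypothesis $2^{<\mathfrak{c}}=\mathfrak{c}$ of the second tool. Under CH we have $\mathfrak{c}=\aleph_1$, and the only infinite cardinal below $\aleph_1$ is $\aleph_0$, so $2^{<\mathfrak{c}}=2^{\aleph_0}=\aleph_1=\mathfrak{c}$. Under Martin's Axiom one uses the standard consequence that $2^{\lambda}=\mathfrak{c}$ for every infinite cardinal $\lambda<\mathfrak{c}$; computing the cardinality of the tree level by level gives $2^{<\mathfrak{c}}=\sum_{\alpha<\mathfrak{c}}2^{|\alpha|}=\mathfrak{c}\cdot\mathfrak{c}=\mathfrak{c}$. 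In either case the tree construction produces the desired family of size $2^{\mathfrak{c}}$.

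The only points requiring care are bookkeeping about sizes rather than any real obstacle: one must keep ``almost disjoint'' in the intended sense (members of full size $\mathfrak{c}$, intersections of size $<\mathfrak{c}$) rather than the trivial one, confirm that the cited $\kappa^+$-family indeed has members of size $\kappa$, and, in the MA case, compute the \emph{cardinality of the union} of all levels of the tree rather than merely the supremum of the $2^{\lambda}$. Since each hypothesis supplies an almost disjoint family of the full cardinality $2^{\mathfrak{c}}$, the mathematical content is entirely carried by Theorem \ref{SZth}, and the corollary follows.
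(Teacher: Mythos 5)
Your proposal is correct and follows the paper's route exactly: the corollary is deduced from Theorem \ref{SZth} by producing, under each hypothesis, an almost disjoint family of $2^{\mathfrak{c}}$ subsets of $\mathfrak{c}$ of full size $\mathfrak{c}$ --- via the ZFC fact that $\kappa$ carries an almost disjoint family of size $\kappa^+$ in the case $\mathfrak{c}^+=2^{\mathfrak{c}}$, and via the existence of $2^{\mathfrak{c}}$ almost disjoint sets under MA or CH, which the paper simply cites to Kunen and you instead establish directly with the branch family $A_x=\{x\restriction\alpha:\alpha<\mathfrak{c}\}$ of the tree $2^{<\mathfrak{c}}$ after verifying $2^{<\mathfrak{c}}=\mathfrak{c}$. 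Your added care (handling MA and CH as separate hypotheses, and computing the cardinality of the union of the tree's levels rather than a supremum) is sound and merely makes explicit what the paper delegates to the references.
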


Also, and from the previous corollary, we can obtain (as particular cases) \cite{GMSS}*{Theorem 5.6} and \cite{GMSS}*{Theorem 5.10}. The following natural question now follows:

\begin{question}
Is it necessary to add any additional hypothesis to ZFC in order to obtain $2^{\mathfrak{c}}$-algebrability (or even $2^{\mathfrak{c}}$-lineability) of $\SZ(\mathbb{R})$?
\end{question}

\section{Perfectly everywhere surjective functions}
In the last years many authors have been interested in the study of different degrees of {\em surjectivity} of functions in $\mathbb{R}$ (see, e.g. \cite{AGS,ACPS,AS,G,GMSS}). Lebesgue \cites{Gel-Olm,Lebesgue} was probably the first to show an example of a function $f\colon\mathbb{R} \to \mathbb{R}$ with the property that on every non-trivial interval $I$,  $f(I) = \mathbb{R}$. In \cite{AGS} the authors proved that the set of these so called {\it everywhere surjective} functions is $2^{\mathfrak{c}}$-lineable, which is the \emph{best} possible result in terms of dimension. One could think that, in terms of surjectivity, these everywhere surjective functions are somewhat the {\it most pathological}. Well, actually (and as we shall see in this and the following section) even \emph{more pathological} surjective functions exist. For instance, there exists a function $f\colon \mathbb{R} \to \mathbb{R}$ such that for every perfect set $P$ and for every $r \in \mathbb{R}$ the set $\{\,x \in P: f(x) = r\,\}$ has cardinality $\mathfrak{c}$ (see \cite{Foran}*{pp.\ 124-125} and \cite{GMSS}*{Example 2.1}). The latter functions are called {\em perfectly everywhere surjective}. A perfectly everywhere surjective function actually attains every real value \emph{$\mathfrak{c}$ times} on any perfect set. Clearly, if $f$ is perfectly everywhere surjective then $f$ is everywhere surjective and the converse is false (see \cite{GMSS}*{Example 2.2}). In this section we shall consider complex functions $f:\C \to \C$ and we shall say that $f$ is perfectly everywhere surjective provided $f(P)=\C$ for every perfect subset $P$ of the complex plane. Recently, it was proved that:
\begin{theorem}[\cite{GMSS}]
The set of perfectly everywhere surjective functions on $\mathbb{C}$ is $\omega$-algebrable.
\end{theorem}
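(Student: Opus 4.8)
The plan is to deduce the whole algebra from a single, well-chosen map and then read off countably many generators as its coordinates. Concretely, I would first produce a function $\Phi\colon\C\to\C^{\N}$ which is \emph{perfectly everywhere surjective into the product}, meaning $\Phi(P)=\C^{\N}$ for every perfect set $P\subseteq\C$, and then set $f_n:=\pi_n\circ\Phi$, where $\pi_n\colon\C^{\N}\to\C$ is the $n$-th coordinate projection. The point of this reduction is that every element of the algebra generated by $\{f_n\}$ has the form $\widetilde Q\circ\Phi$, where $\widetilde Q\colon\C^{\N}\to\C$ is a polynomial depending on finitely many coordinates; on any perfect set $P$ one then gets $(\widetilde Q\circ\Phi)(P)=\widetilde Q(\C^{\N})=Q(\C^{k})$. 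Thus the whole problem collapses to one purely algebraic fact: a nonzero complex polynomial $Q$ \emph{with no constant term} in $k$ variables is nonconstant, and a nonconstant polynomial map $\C^{k}\to\C$ is onto (a single nonconstant polynomial over the algebraically closed field $\C$ has a zero, so $Q-w$ vanishes somewhere for every $w$). Hence every nonzero no-constant-term combination $Q(f_{n_1},\dots,f_{n_k})$ maps each perfect set onto $\C$, i.e. is perfectly everywhere surjective.

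The construction of $\Phi$ is a routine transfinite recursion once the cardinal arithmetic is in place. Since $|\C^{\N}|=\mathfrak c$ and there are exactly $\mathfrak c$ perfect subsets of $\C$, I would enumerate all requirements $(P,v)$ with $P\subseteq\C$ perfect and $v\in\C^{\N}$ as $\{(P_\alpha,v_\alpha):\alpha<\mathfrak c\}$. At stage $\alpha$ fewer than $\mathfrak c$ points of $\C$ have been committed, while $|P_\alpha|=\mathfrak c$, so I may choose a fresh $x_\alpha\in P_\alpha$ and declare $\Phi(x_\alpha)=v_\alpha$; every point of $\C$ never chosen is sent to $0$ (any fixed value). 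Because each pair $(P,v)$ is eventually handled, $v\in\Phi(P)$ for all $v$, that is $\Phi(P)=\C^{\N}$ for every perfect $P$, as required.

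Finally I would check that $\{f_n:n\in\N\}$ is a free system of generators, which simultaneously gives minimality and the size $\omega$. If $Q$ is a nonzero polynomial with no constant term then, by the computation above, $Q(f_{n_1},\dots,f_{n_k})$ is perfectly everywhere surjective and in particular not the zero function; hence no nontrivial algebraic relation holds and the $f_n$ freely generate an algebra contained in $\PES(\C)\cup\{0\}$. A free generating set is automatically minimal (an $f_m$ lying in the algebra generated by the others would furnish a nonzero relation), so the algebra is $\omega$-algebrable, indeed strongly $\omega$-algebrable.

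The one genuinely delicate point, and the reason this works over $\C$, is the surjectivity of nonconstant polynomials: the naive attempt to take $f_n=\exp(c_n\Phi)$ for rationally independent $c_n$ and a scalar PES function $\Phi$ fails, because a single monomial $a\,f_m^{d}=a\,e^{dc_m\Phi}$ omits the value $0$ and so is \emph{not} surjective, wrecking the generators themselves. Passing to coordinates of a $\C^{\N}$-valued PES map repairs exactly this defect, since there each monomial $z_j^{d}$ is already onto $\C$. I therefore expect the main work to be (i) the cardinal bookkeeping guaranteeing fresh points at every stage and (ii) isolating the algebraic surjectivity lemma; the rest is formal.
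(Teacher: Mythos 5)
Your proof is correct --- it in fact delivers \emph{strong} $\omega$-algebrability --- but it takes a genuinely different route from the machinery in the paper. The statement itself is quoted there from \cite{GMSS} as background; what the paper proves with its own tools is the stronger Theorem \ref{PESth} (strong $2^{\mathfrak{c}}$-algebrability), by a combinatorial rather than coordinate-based method: scalar functions $g_\alpha\in\mathcal{PES}(\C)$ equal to $1$ off pairwise disjoint Bernstein sets $B_\alpha$ (Proposition \ref{prop1}) are glued into generators $f_\zeta$ according to an independent family $\{A_\zeta:\zeta<2^{\mathfrak{c}}\}$ of subsets of $\mathfrak{c}$ (Balcar--Fran\v{e}k \cite{BF}), and freeness is verified by restricting a purported relation to a single Bernstein set, applying Lemma \ref{l4}, and transporting nonvanishing between Bernstein sets via a fiber-preserving bijection. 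You instead run one transfinite recursion to produce a vector-valued map $\Phi\colon\C\to\C^{\N}$ with $\Phi(P)=\C^{\N}$ for every perfect $P$, take the coordinates $f_n=\pi_n\circ\Phi$ as generators, and reduce both membership in $\mathcal{PES}(\C)$ and freeness to the single algebraic fact that a nonzero polynomial without constant term maps $\C^{k}$ onto $\C$ --- the same FTA-based fact the paper uses when noting that $Q(f)$ is PES whenever $f$ is; for multivariable $Q$ you should add one line (specialize all but one variable so the top coefficient survives, then apply the one-variable case), but that is routine, not a gap. The trade-off: your construction is more elementary and self-contained, and, with the requirements weakened to ``every finite-coordinate projection of $\Phi(P)$ is onto $\C^{k}$,'' it would even give strong $\mathfrak{c}$-algebrability; however, it cannot pass $\mathfrak{c}$ generators, since the recursion spends one point of $\C$ per requirement and $|\Phi(P)|\le|P|=\mathfrak{c}$, whereas the Bernstein-set/independent-family combinatorics is exactly what lets the paper reach the optimal cardinal $2^{\mathfrak{c}}$.
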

Here, our aim is to improve the above theorem by proving that the set of perfectly everywhere surjective functions on $\mathbb{C}$ is, actually, $2^{\mathfrak{c}}$-algebrable, which (in terms of cardinalities and dimensions) is the best possible result. From now on we shall denote
$$\mathcal{PES}(\C) = \{f: \C \rightarrow \C: f \text{ is perfectly everywhere surjective}\}.$$

Let us begin by recalling that a subset $B$ of $\C$ is called a Bernstein set if $B$ and its complement $B^c$ meet every perfect subset of $\C$. Since every perfect set $P$ contains $\mathfrak{c}$ many perfect subsets, then $P\cap B$ has cardinality $\mathfrak{c}$. Note that any polynomial $Q:\C\to\C$ is an entire function, and thus if $f$ is perfectly everywhere surjective then so is $Q(f)$; in particular this means that $f$ is a free generator in the algebra $\C^\C$ with the usual point-wise addition and product. The following lemma
 can be found in \cite{BG1}.
\begin{lemma}\label{l4}
Let $A$ be a commutative algebra with unit. Let $x\in A$ be a free generator (i.e. $x, x^2, x^3, \ldots$ are linearly independent) and let $P\neq 0$ be a polynomial in $n$ variables without non-zero constant term. Then at least one of the elements $P(x,1, \ldots,1)$, $P(1,x,1, \ldots,1)$, \ldots, $P(1, \ldots,1,x)$ is non-zero.
\end{lemma}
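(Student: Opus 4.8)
The plan is to reduce the multivariate statement to a family of one-variable identities and then exploit the linear independence of the powers of $x$. First I would upgrade the hypothesis: although we are only told that $x, x^2, x^3, \ldots$ are linearly independent, in fact $1, x, x^2, x^3, \ldots$ are linearly independent as well. Indeed, suppose a nontrivial relation $\alpha_0 \cdot 1 + \alpha_1 x + \cdots + \alpha_m x^m = 0$ held. If $\alpha_0 = 0$ this already contradicts the independence of $x, x^2, \ldots, x^m$. If $\alpha_0 \neq 0$, then multiplying through by $x$ gives $\alpha_0 x + \alpha_1 x^2 + \cdots + \alpha_m x^{m+1} = 0$, a nontrivial dependence among $x, x^2, \ldots, x^{m+1}$ (the coefficient of $x$ being $\alpha_0 \neq 0$), again a contradiction. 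Hence $\{1, x, x^2, \ldots\}$ is linearly independent; note this also forces $1 \neq 0$, so $A$ is nontrivial.

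Next I would set up the reduction and argue by contradiction. Writing $P = \sum_{\mathbf k} c_{\mathbf k}\, t_1^{k_1}\cdots t_n^{k_n}$, for each $i$ let $Q_i(t) := P(1,\ldots,1,t,1,\ldots,1)$ be the one-variable polynomial obtained by placing the indeterminate $t$ in the $i$-th slot and $1$ in all the others, so that $Q_i(t) = \sum_{j\ge 0} a_{i,j}\, t^j$ with $a_{i,j} = \sum_{\mathbf k : k_i = j} c_{\mathbf k}$. If every element in the list $P(x,1,\ldots,1), \ldots, P(1,\ldots,1,x)$ vanished, then $Q_i(x) = \sum_j a_{i,j}\, x^j = 0$ for each $i$; by the linear independence of $\{1, x, x^2, \ldots\}$ established above, this forces $a_{i,j} = 0$ for all $i$ and all $j$, i.e.\ each $Q_i$ is the zero polynomial.

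It then remains to derive a contradiction from the two facts that all restriction polynomials $Q_i$ vanish identically and that $P$ has no constant term ($c_{\mathbf 0}=0$), and this is where I expect the real difficulty to lie. Vanishing of the $Q_i$ only says that, for each variable $t_i$ and each exponent $j$, the coefficients of the monomials of $t_i$-degree exactly $j$ sum to zero; this is a system of linear conditions on the $c_{\mathbf k}$ that, by itself, is far from forcing $P=0$, so the no-constant-term hypothesis must enter essentially to break the cancellations between monomials of different total degrees (for $n=1$ this is immediate, and $n=2$ is a short direct computation, but already $n\ge 3$ requires genuine care). I would try to close the argument by induction on the number of variables: grouping $P = \sum_k R_k(t_1,\ldots,t_{n-1})\, t_n^k$, the condition $Q_n\equiv 0$ gives $R_k(1,\ldots,1)=0$ for every $k$, while the conditions $Q_i\equiv 0$ for $i<n$ translate into statements about the $(n-1)$-variable polynomial $S:=\sum_k R_k = P(t_1,\ldots,t_{n-1},1)$, to which one would like to apply the inductive hypothesis. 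The delicate point, and the crux of the whole proof, is to keep careful track of the constant term of $S$, which need not vanish even though that of $P$ does; controlling this constant-term bookkeeping as the induction proceeds is the main obstacle I anticipate.
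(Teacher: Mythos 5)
Your first two steps are correct and cleanly done: multiplying a putative relation by $x$ does upgrade the hypothesis to linear independence of $1,x,x^2,\ldots$, and it is right that the lemma is then equivalent to the purely formal claim that a non-zero $P$ with $c_{\mathbf 0}=0$ cannot have all restrictions $Q_i(t)=P(1,\ldots,1,t,1,\ldots,1)$ identically zero. The fatal problem is the step you defer to the end: it is not a matter of delicate constant-term bookkeeping, it is false --- already for $n=2$, so in particular your parenthetical claim that the $n=2$ case is ``a short direct computation'' is mistaken. Consider
\[
P(t_1,t_2)=t_2(t_1-1)(t_2-1)=t_1t_2^2-t_1t_2-t_2^2+t_2,
\]
a non-zero polynomial with zero constant term. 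Then $P(x,1)=1\cdot(x-1)\cdot 0=0$ and $P(1,x)=x\cdot 0\cdot(x-1)=0$ in every commutative unital algebra, for every $x$. The geometric reason no completion of your induction can exist: the conditions $Q_1\equiv\cdots\equiv Q_n\equiv 0$ say only that $P$ vanishes on the $n$ axis-parallel lines through $(1,\ldots,1)$, and for $n\geq 2$ the origin lies on none of those lines, so the constraint $P(0,\ldots,0)=0$ is perfectly compatible with vanishing on all of them. In other words, Lemma \ref{l4} as stated is false, and your (valid) reduction is in fact the quickest route to seeing this. Note also that the paper offers no proof to compare against: it only cites \cite{BG1} for the statement.

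The failure is not an artifact of using only the $n$ ``axis'' substitutions, either. The polynomial $(t_1-1)(t_2-1)(t_1-t_2)$ is non-zero, has zero constant term, and vanishes whenever $x$ is substituted into any non-empty set of coordinates and $1$ into the rest. This stronger example is the one relevant to the paper, since the proof of Theorem \ref{PESth} evaluates $P$ on exactly such $\{g_\alpha,1\}$-patterns over the Bernstein pieces $B_\alpha$; with this $P$ one gets $P(f_{\zeta_1},f_{\zeta_2})=0$ identically, so the functions constructed there are not free generators and that argument inherits the same defect. A statement that your linear-independence input genuinely proves --- and which is the standard repair --- replaces patterns of $x$'s and $1$'s by distinct powers of $x$: choose positive integers $m_1,\ldots,m_n$ (say $m_i=N^i$ with $N>\deg P$) so that $\mathbf k\mapsto k_1m_1+\cdots+k_nm_n$ is injective on the support of $P$; then
\[
P(x^{m_1},\ldots,x^{m_n})=\sum_{\mathbf k}c_{\mathbf k}\,x^{k_1m_1+\cdots+k_nm_n}\neq 0,
\]
because the exponents appearing are distinct and $\geq 1$, so only the linear independence of $x,x^2,x^3,\ldots$ is needed. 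If you want to salvage your write-up, prove that statement instead; the lemma as printed cannot be proved.
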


The following proposition shall be necessary in order to prove the main result of this section, and its proof follows the same spirit of that in \cite{GMSS}*{Example 2.1}.

\begin{proposition}\label{prop1}
Let $B \subset \C$ be a Bernstein set. There exist $f\in \mathcal{PES}(\C)$ which is identically 1 on the complement of $B$.
\end{proposition}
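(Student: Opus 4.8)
The plan is to construct $f$ by a transfinite recursion along an enumeration of the perfect subsets of $\C$ paired with the target values, all while keeping $f\equiv 1$ off $B$. First I would recall that there are exactly $\mathfrak c$ perfect subsets of $\C$ (each perfect set is determined by its closure data, and there are $\mathfrak c$ closed sets), so I can fix an enumeration $\{(P_\alpha,r_\alpha):\alpha<\mathfrak c\}$ in which every pair $(P,r)$ of a perfect set $P\subset\C$ and a value $r\in\C$ appears $\mathfrak c$ times (equivalently, enumerate $\{P_\alpha:\alpha<\mathfrak c\}$ with each perfect set listed $\mathfrak c$ times, and separately enumerate the values). The aim of the recursion is to choose, at stage $\alpha$, a point $z_\alpha\in P_\alpha\cap B$ that has not been used before and set $f(z_\alpha)=r_\alpha$; after exhausting all stages I put $f\equiv 1$ on the remaining points of $B$ and on all of $B^c$.

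The key point that makes the recursion go through is a counting/freshness argument. At stage $\alpha$ I have defined $f$ on a set $D_\alpha=\{z_\beta:\beta<\alpha\}$ of cardinality $|\alpha|<\mathfrak c$. Since $B$ is a Bernstein set and $P_\alpha$ is perfect, $P_\alpha\cap B$ has cardinality $\mathfrak c$ (as noted in the excerpt, every perfect set meets $B$ in a set of full cardinality $\mathfrak c$). Hence $(P_\alpha\cap B)\setminus D_\alpha$ is nonempty, and I may pick a fresh point $z_\alpha$ in it and declare $f(z_\alpha)=r_\alpha$. This is the heart of the argument and the only place requiring care: I must ensure at each stage that the forbidden set $D_\alpha$ is too small to exhaust $P_\alpha\cap B$, which is exactly what $|\alpha|<\mathfrak c=|P_\alpha\cap B|$ guarantees.

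It then remains to verify that the resulting $f$ is perfectly everywhere surjective and equals $1$ off $B$. The latter is immediate, since I defined $f\equiv 1$ on $B^c$. For the former, let $P\subset\C$ be any perfect set and $r\in\C$; I must produce $x\in P$ with $f(x)=r$. Because the pair $(P,r)$ appears at some stage $\alpha$ in my enumeration (indeed at $\mathfrak c$ many stages), at that stage I chose $z_\alpha\in P_\alpha=P$ with $f(z_\alpha)=r_\alpha=r$, so $r$ is attained on $P$. This shows $f(P)=\C$ for every perfect $P$, which is precisely the defining condition for $f\in\mathcal{PES}(\C)$.

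I expect the main obstacle to be purely bookkeeping rather than conceptual: one must set up the enumeration so that each (perfect set, value) pair recurs cofinally often in $\mathfrak c$, and must confirm that the distinct chosen points $z_\alpha$ never collide (which is automatic from the freshness condition $z_\alpha\notin D_\alpha$). The use of the Bernstein property is what upgrades the classical everywhere-surjective construction to one that lives inside $B$ while leaving $B^c$ free to carry the constant value $1$; it is this freedom on $B^c$ that will later let such functions serve as free generators of a large algebra, since the product structure off $B$ is trivial while the surjectivity is concentrated on $B$.
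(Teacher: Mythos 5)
Your proposal is correct and follows essentially the same argument as the paper: a transfinite recursion over an enumeration of (perfect set, value) pairs, choosing at each stage a fresh point in $P_\alpha\cap B$ (possible since $|P_\alpha\cap B|=\mathfrak{c}$ by the Bernstein property while only fewer than $\mathfrak{c}$ points have been used), and setting $f\equiv 1$ everywhere else. The only cosmetic difference is that the paper enumerates pairs of the form $(P\cap B, y)$ rather than $(P, y)$, which changes nothing in substance.
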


\begin{proof}
First of all let us recall that if $P$ is a perfect set then $P \cap B$ has cardinality $\mathfrak{c}$. Now, the cardinality of the collection of pairs $(P \cap B,y)$ where $P$ is a perfect set and $y \in \mathbb{C}$ is $\mathfrak{c}$. Let $\{ A_{\alpha} : \alpha < \mathfrak{c} \}$ be a well-ordering of this set and notice that each perfect set $P$ occurs $\mathfrak{c}$ times at a first part of a pair $(P\cap B,y)$. \newline
Assume that $A_0 = (P_0\cap B,y_0)$, choose $x_0 \in P_0\cap B$ and define $f(x_0) = y_0$. Let us assume now that, for each $\beta < \alpha$, $A_{\beta} = (P_{\beta}\cap B,y_{\beta})$ and that a point $x_{\beta} \in P_{\beta}\cap B$ has been chosen in such a way that for every $\gamma < \beta$, $x_{\beta} \neq x_{\gamma}$ and with $f(x_{\beta}) = y_{\beta}$. Next, let $A_{\alpha} = (P_{\alpha}\cap B, y_{\alpha})$. Since the cardinality of the set $\{x_{\beta} : \beta < \alpha\}$ is less than $\mathfrak{c}$, there exists $x_{\alpha} \in (P_{\alpha}\cap B) \setminus \{x_{\beta}: \beta < \alpha\}$. Now, define $f(x_{\alpha}) = y_{\alpha}$. Thus, for each $\alpha < \mathfrak{c}$, a point has been chosen so that for $\beta < \alpha$, $x_{\alpha} \neq x_{\beta}$ and $f(x_{\alpha})=y_{\alpha}$. If $x \notin \{x_{\alpha} :  \alpha< \mathfrak{c}\}$, we define $f(x) = 1$. By doing this, $f$ is now defined on $\mathbb{C}$. \newline
Now, if $P$ is any perfect set and $y \in \mathbb{C}$, then there exists $\alpha < \mathfrak{c}$ such that $(P\cap B,y) = A_{\alpha}$. Therefore there exists $x = x_{\alpha} \in P\cap B$ with $f(x) = y$, and thus $f(P) \supset f(P\cap B) = \mathbb{C}$. Also, and by construction, it can be checked that $f$ is identically $1$ on $B^{c}$.
\end{proof}

Before carrying on, let us first recall the notion of {\em independence} in abstract set theory (see, e.g. \cite{BF}).

\begin{definition}
A family $(A_s)$ of subsets of $\kappa$ (or any fixed set of cardinality $\kappa$) is called independent if the set $A_{s_1}^{i_1} \cap \ldots \cap A_{s_n}^{i_n}$ has cardinality $\kappa$ for any $n\in \mathbb{N}$, distinct $s_1, \ldots, s_n$ and any $i_k \in \{0,1\}$, where we denote $A^0=A$ and $A^1$ stands for the complement of $A$ for any set $A$.
\end{definition}

From a general result due to Balcar and Fran{\v{e}}k (see \cite{BF}), it follows that for any set of cardinality $\kappa$, there exists an independent family of its subsets of cardinality the largest possible, i.e. $2^\kappa$. This shall be used in the main result of this section, that we state and prove  now.

\begin{theorem}\label{PESth}
The set $\mathcal{PES}(\C)$ is strongly $2^\mathfrak{c}$-algebrable.
\end{theorem}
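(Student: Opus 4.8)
The plan is to combine the independence construction for subsets of $\mathfrak c$ with Proposition \ref{prop1}, mimicking the role that almost disjoint families played in the proof of Theorem \ref{SZth}. First I would invoke the Balcar--Fran\v{e}k result to fix an independent family $(A_s)_{s<2^\mathfrak c}$ of subsets of $\C$, each of cardinality $\mathfrak c$ (viewing $\C$ as a set of cardinality $\mathfrak c$). The key observation is that any set $A_s$ from an independent family is a Bernstein set: indeed, for each perfect $P$ and each $i\in\{0,1\}$, the intersection $P\cap A_s^i$ has cardinality $\mathfrak c$ by independence applied with $n=1$, so both $A_s$ and its complement meet every perfect set in $\mathfrak c$ many points. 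Hence for each $s$ I may apply Proposition \ref{prop1} to obtain $f_s\in\mathcal{PES}(\C)$ which is identically $1$ on $\C\setminus A_s$. The candidate free generating set is $\{f_s:s<2^\mathfrak c\}$.

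Next I would verify that these functions freely generate an algebra inside $\mathcal{PES}(\C)\cup\{0\}$. By the characterization recalled in the excerpt, it suffices to show that for every non-zero polynomial $P$ in $n$ variables and any distinct $s_1,\ldots,s_n$, the function $P(f_{s_1},\ldots,f_{s_n})$ is perfectly everywhere surjective (in particular non-zero). We may assume $P$ has no constant term, since adding a nonzero constant to a perfectly everywhere surjective function leaves it perfectly everywhere surjective, so the constant term is harmless; more precisely I would handle the constant term by absorbing it at the end. The heart of the argument exploits independence to find, inside any prescribed perfect set $P$, a large set of points where exactly one chosen generator is ``free'' and the others are pinned to $1$. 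Concretely, fix a perfect set $P$ and a target $w\in\C$. For each $j\le n$ consider the set
\[
B_j \;=\; A_{s_j}\cap\bigcap_{k\neq j}\bigl(\C\setminus A_{s_k}\bigr),
\]
which by independence (with the choice $i_j=0$, $i_k=1$ for $k\neq j$) has cardinality $\mathfrak c$, and moreover $P\cap B_j$ has cardinality $\mathfrak c$ by the same independence property relative to the perfect set. On $P\cap B_j$ every $f_{s_k}$ with $k\neq j$ equals $1$, while $f_{s_j}$ still ranges over all of $\C$ since $f_{s_j}\in\mathcal{PES}(\C)$ and $P\cap B_j$ can be refined to contain a perfect subset on which $f_{s_j}$ is perfectly surjective.

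The main obstacle, and where I would spend the most care, is turning the univariate surjectivity of each $f_{s_j}$ on the slice $P\cap B_j$ into surjectivity of the multivariate polynomial $P(f_{s_1},\ldots,f_{s_n})$ on $P$. The tool is Lemma \ref{l4}: applied to the free generator $x=f_{s_j}$ in the algebra $\C^\C$, it guarantees that for at least one index $j$ the one-variable specialization $t\mapsto P(1,\ldots,1,t,1,\ldots,1)$ (with $t$ in the $j$-th slot) is a non-zero polynomial, say $Q_j(t)$, without non-zero constant term. On $P\cap B_j$ the function $P(f_{s_1},\ldots,f_{s_n})$ agrees with $Q_j(f_{s_j})$. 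Since $Q_j$ is a non-constant polynomial it is surjective from $\C$ onto $\C$ (every complex polynomial of positive degree is onto, by the fundamental theorem of algebra), and since $f_{s_j}$ restricted to a suitable perfect subset of $P\cap B_j$ attains every value in $\C$, the composition $Q_j(f_{s_j})$ attains every value $w\in\C$ on $P$. Thus $P(f_{s_1},\ldots,f_{s_n})(P)=\C$ for every perfect $P$, so $P(f_{s_1},\ldots,f_{s_n})\in\mathcal{PES}(\C)$; in particular it is not the zero function, which establishes freeness. This shows $\{f_s:s<2^\mathfrak c\}$ is a set of free generators of an algebra contained in $\mathcal{PES}(\C)\cup\{0\}$, giving strong $2^\mathfrak c$-algebrability, and the cardinality $2^\mathfrak c$ is optimal since $\lvert\C^\C\rvert=2^\mathfrak c$.
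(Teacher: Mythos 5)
Your proposal mirrors the paper's strategy (an independent family in the style of Balcar--Fran\v{e}k, Proposition \ref{prop1}, and Lemma \ref{l4}), but several of its steps are genuinely false, starting with the foundation. Independence of a family of subsets of $\C$ is a purely cardinality-theoretic property of finite Boolean combinations; it is invariant under arbitrary bijections of $\C$, whereas being a Bernstein set is not, so independence cannot imply Bernstein-ness, nor does it give $|K\cap B_j|=\mathfrak c$ for every perfect set $K$ (I write $K$ for perfect sets to avoid clashing with the polynomial $P$). Concretely, let $C$ be the unit circle and apply the Balcar--Fran\v{e}k theorem to $\C\setminus C$, a set of cardinality $\mathfrak c$: the resulting family is still independent as a family of subsets of $\C$, yet no member meets $C$, so no member is Bernstein. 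This is exactly why the paper separates the two roles, taking a pairwise disjoint family $\{B_\alpha:\alpha<\mathfrak c\}$ of genuine Bernstein subsets of $\C$ and an independent family of subsets of the \emph{index set} $\mathfrak c$. A second gap: even granting that each $A_s$ is Bernstein, Proposition \ref{prop1} only yields $f_{s_j}(K\cap A_{s_j})=\C$; the witnessing points could all lie in $\bigcup_{k\ne j}A_{s_k}$, so $f_{s_j}(K\cap B_j)=\C$ does not follow, and your proposed repair---finding a perfect subset of $K\cap B_j$---is impossible, because $B_j\subset A_{s_j}$ and no subset of a Bernstein set contains a perfect set. Moreover, since you invoke Proposition \ref{prop1} separately for each $s$, the joint values of distinct generators on the overlaps $A_{s_j}\cap A_{s_k}$ are completely uncontrolled: for instance, taking pairwise disjoint Bernstein sets $B_1,B_2,B_3$, setting $A_{s_1}=B_1\cup B_2$, $A_{s_2}=B_2\cup B_3$, and letting the prop1-constructions place all their witnesses in $B_1$ and $B_3$ respectively, one gets $f_{s_1}=f_{s_2}=1$ on the overlap $B_2$, consistently with everything you assume.

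That uncontrolled overlap collides with the fatal defect in your endgame. You strengthen Lemma \ref{l4} illegitimately: it asserts that some specialization $Q_j$ is a non-zero polynomial, not that $Q_j$ has zero constant term (for $P(x_1,x_2)=x_1x_2+x_1-x_2$ one gets $Q_2\equiv1$), and a non-zero constant $Q_j$ is not surjective, so ``non-zero'' cannot be upgraded to ``non-constant'' as your argument needs. Much worse, consider $P(x_1,x_2)=(x_1-1)(x_2-1)(x_1-x_2)$: it is non-zero with zero constant term, and it vanishes whenever either argument equals $1$ or the two arguments are equal; hence $P(t,1)\equiv P(1,t)\equiv 0$ and no index $j$ of the kind you require exists (this polynomial in fact refutes Lemma \ref{l4} as stated: take $x=t$ in $\C[t]$). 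For this $P$ your argument proves nothing, and in the overlap scenario above one actually gets $P(f_{s_1},f_{s_2})\equiv 0$, so your family need not even be free. I should add that the same polynomial defeats the paper's own proof: the paper's generators have, at every point of $\C$, values that either coincide or include $1$, so there too $P(f_{\zeta_1},f_{\zeta_2})\equiv 0$ identically and the constructed family is not free; thus this step is unsound in both arguments, not just yours. A correct proof must control the \emph{joint} values of the generators. For instance, partition $\C$ into $\mathfrak c$ Bernstein sets $\{B_\gamma:\gamma<\mathfrak c\}$, take functions $h_\zeta\colon\mathfrak c\to\C$, $\zeta<2^{\mathfrak c}$, such that every finite tuple $(h_{\zeta_1},\dots,h_{\zeta_n})$ attains every vector of $\C^n$ at some $\gamma$ (such families exist by a Hewitt--Marczewski--Pondiczery-type density argument), and let $f_\zeta$ be constantly $h_\zeta(\gamma)$ on $B_\gamma$. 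Then every tuple $(f_{\zeta_1},\dots,f_{\zeta_n})$ maps every perfect set onto $\C^n$; since a non-zero $P$ without constant term is non-constant, and non-constant polynomials map $\C^n$ onto $\C$, every such $P(f_{\zeta_1},\dots,f_{\zeta_n})$ lies in $\mathcal{PES}(\C)$, which gives freeness and the inclusion of the generated algebra in $\mathcal{PES}(\C)\cup\{0\}$ simultaneously.
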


\begin{proof}
Let $\{A_\zeta:\zeta<2^\mathfrak{c}\}$ be an independent family of subsets of $\mathfrak{c}$. Let $\{B_\alpha:\alpha<\mathfrak{c}\}$ be a family of pairwise disjoint Bernstein subsets of $\C$. Let $g_\alpha$ be a perfectly everywhere surjective function as in Proposition \ref{prop1} (for $B_\alpha$). Let us define $f_\zeta: \C \to \C$ as follows:
$$f_\zeta(x)=\left\{
\begin{array}
[c]{cl}
g_\alpha(x) & \mathrm{if~} x\in B_\alpha \, \mathrm{ and } \, \alpha\in A_\zeta,\\
1 & \mathrm{otherwise}.
\end{array}
\right.$$
We shall prove that $\{f_\zeta:\zeta<2^\mathfrak{c}\}$ is a family of free generators.  Let $\zeta_1<\zeta_2< \ldots <\zeta_n<2^\mathfrak{c}$ and let $P$ be a polynomial in $n$ variables. It suffices to show that $P(f_{\zeta_1},f_{\zeta_2},\ldots,f_{\zeta_n})$ is in $\mathcal{PES}(\C)$. Since $\{A_\zeta: \zeta < 2^\mathfrak{c}\}$ is independent, there is $\alpha\in A_{\zeta_1}\setminus\bigcup_{i=2}^{n} A_{\zeta_i}$. Then $P(f_{\zeta_1},f_{\zeta_2},\ldots, f_{\zeta_n})|_{B_\alpha}=P(g_\alpha|_{B_\alpha},1, \ldots,1)$. As we mentioned earlier in this section $g_\alpha$ is a free generator. Therefore, by Lemma \ref{l4}, one of the elements $P(f_{\zeta_1},1, \ldots,1)$, $P(1,f_{\zeta_1},1, \ldots,1)$, $\ldots$,$P(1, \ldots,1,f_{\zeta_1})$ is non-zero on $B_\alpha$, and therefore is in $\mathcal{PES}(\C)$. Without loss of generality we may assume that $P(1, \ldots,1,f_{\zeta_1})$ is non-zero on $B_\alpha$.

Now, we shall show that $P(1, \ldots,1,f_{\zeta_n})$ is non-zero and, therefore belongs to $\mathcal{PES}(\C)$. Since $\{A_\zeta:\zeta<2^\mathfrak{c}\}$ is independent, there exists $\beta\in A_{\zeta_n}\setminus\bigcup_{i=1}^{n-1}A_{\zeta_i}$. Then $f_{\zeta_n}$ is equal to $g_\beta$ over $B_\beta$. Notice also that the fibres $g_\alpha^{-1}(x)$ and $g_\beta^{-1}(x)$ have cardinality $\mathfrak{c}$ for every $x \in \C$. Thus, there is a bijection $h:B_\alpha \leftrightarrow B_\beta$ such that $h$ maps $g_\alpha^{-1}(x)$ onto $g_\beta^{-1}(x)$ for every $x\in\C$. Then $f_{\zeta_1}|_{B_\alpha}=f_{\zeta_n}|_{B_\beta}\circ h$. Hence $$P(1, \ldots,1,f_{\zeta_1}|_{B_\alpha})=P(1, \ldots,1,f_{\zeta_n}|_{B_\beta}\circ h)$$ is non-zero. Thus, $P(1, \ldots,1,f_{\zeta_n}|_{B_\beta})$ is non-zero and the result follows.
\end{proof}

\section{Nowhere continuous Darboux functions}
As it is known (at an undergraduate level) if a function \(f\in \mathbb{R}^\mathbb{R}\) is continuous, then it maps connected sets to connected sets (in other words, it is a Darboux function). Very recently (see \cite{GMPS_LAA} and \cite{monthly}*{Theorem 1}) it was constructed a $2^{\mathfrak{c}}$-dimensional linear space of functions in $\mathbb{R}^{\mathbb{R}}$ every non-zero element of which is nowhere continuous and Darboux. In this section we are interested in, again, moving from lineability to algebrability, trying to improve the result just mentioned. In order to do that, we shall need to define several notions that shall be of help throughout this section.

Of course, and although there is no need in saying it here, if $f \in \pes$ then $f$ is nowhere continuous. As we mentioned in the previous section, a function \(f\in \mathbb{R}^\mathbb{R}\) is called {\em everywhere surjective} (denoted $f \in \es$, \cite{AGS}) if for every non-void interval $I$ of $\mathbb{R}$ we have $f(I) = \mathbb{R}$. It is clear that $\pes \subset \es$ (see \cite{GMSS}).

If \(f\in \mathbb{R}^\mathbb{R}\), we say that \(f \in \jones\) (\(f\) is a Jones function) if for every closed set \(K \subset \mathbb R^2\) with uncountable projection on the \(x\)-axis, we have \(f \cap K \neq \varnothing\) (see \cite{jones}). As it was proved in \cite{GMS_add}, the following holds $$\jones \subset \pes \subset \es.$$
Interestingly enough, it is known (\cite{GMSS,GMS_add}) that neither perfectly everywhere surjective functions nor Jones functions are measurable and, in contrast with Section 2 in this paper, we also have that  (\cite{GMS_add}*{Corollary 3.8}) $\SZ(\mathbb R) \cap \jones = \varnothing$.

Throughout this section, we shall identify a function from \(\mathbb R\) to \(\mathbb R\) with its graphic, that is, we shall consider that function as a subset of \(\mathbb R^2\). Nevertheless, sometimes it is more convenient the opposite procedure, and we shall consider also a set \(E\in \mathbb R^2\) as a correspondence between \(\mathbb R\) and \(\mathbb R\). Let us also write
	$$\dom (E) = \{ \,x\in\mathbb R:\text{there exists }y \in \mathbb R \text{ such that }(x,y) \in E \, \},$$
which is nothing but a projection of $E$ on the first coordinate.
Although the set of Jones functions is $2^{\mathfrak{c}}$-lineable (see \cite{G,GMS_add}), it is easy to see that it is not algebrable. Indeed, if \(f\in J(\mathbb R) \), then \(f^2\) is not even surjective. In \cite{GMPS_preprint} the authors defined a dual concept to that of Jones functions but on the complex plane and they showed that (in the complex setting) this set is actually algebrable.

The following result, although of independent interest, provides a relatively easy way to see that the set of Jones functions is $\mathfrak{c}$-lineable (see also \cite{G}*{Theorem 2.2} for the best possible result in this direction).

\begin{proposition}\label{compo}
Let \(f\in\jones\) and \(\varphi\in \mathcal C(\mathbb R)\) surjective. Then \(\varphi\circ f\in \jones\).
\end{proposition}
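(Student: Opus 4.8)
The plan is to reduce the claim to the defining property of $f\in\jones$ by pulling the prescribed test set back through $\varphi$ in the second coordinate. Concretely, I would introduce the map $\Phi\colon\R^2\to\R^2$ given by $\Phi(x,y)=(x,\varphi(y))$, which is continuous because $\varphi$ is. The reason this map is the right device is the graph identity: the graph of $\varphi\circ f$ is exactly the image $\Phi(f)$ of the graph of $f$, so a point $(x,\varphi(f(x)))$ lands in a given set $K$ precisely when the point $(x,f(x))$ lands in $\Phi^{-1}(K)$. This converts ``$\varphi\circ f$ meets $K$'' into ``$f$ meets $\Phi^{-1}(K)$'', which is the form to which the Jones property of $f$ applies.

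So, fix a closed set $K\subset\R^2$ whose projection on the $x$-axis is uncountable; the goal is to produce an $x$ with $(x,\varphi(f(x)))\in K$. First I would set $\widetilde K:=\Phi^{-1}(K)$. Since $K$ is closed and $\Phi$ is continuous, $\widetilde K$ is closed. Then I would check that $\widetilde K$ again has uncountable projection on the $x$-axis: if $x$ lies in the projection of $K$, pick $z$ with $(x,z)\in K$ and, using that $\varphi$ is onto, choose $y$ with $\varphi(y)=z$; then $\Phi(x,y)=(x,z)\in K$, so $(x,y)\in\widetilde K$ and $x$ lies in the projection of $\widetilde K$. Hence the projection of $\widetilde K$ contains that of $K$ and is therefore uncountable.

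With $\widetilde K$ closed and of uncountable $x$-projection, the hypothesis $f\in\jones$ furnishes a point of the graph of $f$ in $\widetilde K$, i.e. some $x$ with $(x,f(x))\in\Phi^{-1}(K)$. Unwinding the definition of $\Phi$, this says precisely that $(x,\varphi(f(x)))\in K$, so the graph of $\varphi\circ f$ meets $K$. As $K$ was an arbitrary closed set with uncountable projection, this establishes $\varphi\circ f\in\jones$.

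The only genuinely load-bearing step is the preservation of uncountability of the projection under $\Phi^{-1}$, and this is exactly where surjectivity of $\varphi$ is indispensable: without it $\Phi^{-1}(K)$ could have a strictly smaller (possibly countable or even empty) projection, and the reduction would break down. Everything else — continuity of $\Phi$, closedness of preimages of closed sets, and the graph identity $\varphi\circ f=\Phi(f)$ — is routine and requires no further hypotheses.
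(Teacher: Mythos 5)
Your proof is correct and follows essentially the same route as the paper's: the same auxiliary map $\Phi(x,y)=(x,\varphi(y))$, the same observation that $\Phi^{-1}(K)$ is closed with projection containing that of $K$ (using surjectivity of $\varphi$), and the same unwinding to conclude that the graph of $\varphi\circ f$ meets $K$. Nothing is missing; your closing remark on where surjectivity is indispensable is a nice addition, though not required.
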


\begin{proof}
Let \(\Phi\colon\mathbb R^2\to\mathbb R^2\) defined by \(\Phi(z,w)=(z,\varphi(w))\). Obviously \(\Phi\) is a continuous function. Let \(K\subset\mathbb R^2\) a closed set with \(\dom (K)\) uncountable. The set \(\Phi^{-1}(K)\) is clearly closed. Besides, we have \(\dom((\Phi^{-1}(K)))\supset\dom (K)\). Indeed, if \(z\in\dom (K)\), there exists \(w\) such that \((z,w)\in K\). As \(\varphi\) is surjective, there exists \(w'\) such that \(\varphi(w')=w\). So, \(\Phi(z,w')=(z,\varphi(w'))\in K\). That is, \((z,w')\in \Phi^{-1}(K)\), and so \(z\in\dom((\Phi^{-1}(K)))\). Therefore, \(\dom((\Phi^{-1}(K)))\) is uncountable, and hence, since \(f \in \jones\), we have \(f\cap \Phi^{-1}(K)\neq\varnothing\). Let \((z,w)\in f\cap \Phi^{-1}(K)\). Then we have \(w=f(z)\) and \((z,\varphi(w))=\Phi(z,w)\in K\). That means that \((z,(\varphi\circ f)(z))\in K\), that is, \((\varphi\circ f)\cap K\neq\varnothing\).
\end{proof}

As we said right before the previous result, and since dim$(\mathcal{C}(\mathbb{R})) = \mathfrak{c}$, it suffices to take $f \in \jones$ and the linear space
$$\left\{ \varphi \circ f \,:\, \varphi\in \mathcal C(\mathbb R)\right\}$$
to achieve the $\mathfrak{c}$-lineability of $\jones$.

The following result shall be useful in the proof of the main result of this section.

\begin{proposition}\label{connected}
Let \(f\in\es\) and \(\varphi \in \mathcal C(\mathbb R)\). Then \(\varphi\circ f\) is a Darboux function.
\end{proposition}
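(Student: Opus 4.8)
The plan is to apply the definition of the Darboux property directly. Recall that a map $g\colon\R\to\R$ is Darboux exactly when it sends every connected subset of $\R$ (i.e.\ every interval) to a connected set. So I would fix an arbitrary interval $I\subseteq\R$ and examine the image $(\varphi\circ f)(I)=\varphi(f(I))$, aiming to show it is connected in every case.

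First I would dispose of the degenerate cases: if $I$ is empty or a single point, then its image is empty or a single point, which is connected, so there is nothing to prove. The substance of the argument is the case in which $I$ is non-degenerate. Then $I$ contains a non-void open subinterval $J$, and since $f\in\es$ the defining property of everywhere surjectivity gives $f(J)=\R$. Hence $f(I)\supseteq f(J)=\R$, so in fact $f(I)=\R$, and therefore $(\varphi\circ f)(I)=\varphi\bigl(f(I)\bigr)=\varphi(\R)$. In other words, everywhere surjectivity collapses the $f$-image of every non-degenerate interval to all of $\R$.

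It then remains only to observe that $\varphi(\R)$ is connected, which is immediate: $\R$ is connected and $\varphi$ is continuous, so its image is connected. Combining the two cases, $(\varphi\circ f)(I)$ is connected for every interval $I$, which is precisely the assertion that $\varphi\circ f$ is Darboux. I do not anticipate any real obstacle: the everywhere-surjectivity of $f$ does almost all the work by reducing the problem to the continuity of $\varphi$ on the connected set $\R$, and the only point meriting a word of care is to handle the degenerate intervals separately, where the Darboux condition holds trivially.
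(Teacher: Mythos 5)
Your proof is correct and is essentially the paper's own argument: the paper likewise observes that $f(I)=\mathbb{R}$ for any (non-degenerate) interval $I$, so that $(\varphi\circ f)(I)=\varphi(\mathbb{R})$ is connected by continuity of $\varphi$. The only differences are cosmetic: you treat the degenerate intervals explicitly (a point the paper glosses over), while the paper prefaces the argument with an unnecessary appeal to Proposition~\ref{compo} for the case of surjective $\varphi$, which your direct argument subsumes.
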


\begin{proof}
From Proposition \ref{compo} the result immediately follows if $\varphi$ is surjective. In general, take $I$ any connected subset of $\mathbb{R}$, that is, an interval. Take any $f \in \es$, then (by definition) $f(I) = \mathbb{R}$. Then, $\varphi (f(I))$ is an interval and, therefore, connected.
\end{proof}

It can be seen (see \cite{AGS,GMSS,GMS_add}) that if $p$ is a non-constant polynomial in $\mathbb{R}$ and if $f \in \es$, then $p\circ f$ is nowhere continuous. This last remark, together with the following corollary to Proposition \ref{connected}, shall be useful in the main result of this section.

\begin{corollary}\label{connected_cor}
Let \(f\in\es\) and $p$ any non-constant polynomial in $\mathbb{R}$. Then \(p\circ f\) is nowhere continuous and Darboux.
\end{corollary}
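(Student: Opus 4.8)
The plan is to treat the two conclusions separately, since each follows almost immediately from the machinery already assembled. The Darboux part requires essentially no new work: a non-constant polynomial $p$ is in particular an element of $\mathcal{C}(\mathbb{R})$, so Proposition \ref{connected} applied with $\varphi = p$ yields at once that $p \circ f$ maps connected sets to connected sets, that is, $p\circ f$ is a Darboux function. This settles the second assertion.

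For the nowhere continuity part, the key observation is that everywhere surjectivity is preserved, in a strong quantitative sense, under composition. First I would fix an arbitrary non-void interval $I \subset \R$ and compute $(p\circ f)(I) = p(f(I)) = p(\R)$, using that $f(I) = \R$ by the definition of $f \in \es$. Then I would note that, since $p$ is non-constant, $|p(t)| \to \infty$ as $|t|\to\infty$, so $p(\R)$ is an unbounded subset of $\R$; in particular $p \circ f$ is unbounded on every interval.

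Finally I would deduce nowhere continuity from this local unboundedness. If $p\circ f$ were continuous at some point $x_0$, it would be bounded on a neighbourhood of $x_0$; but every such neighbourhood contains an interval on which $p\circ f$ has unbounded range, a contradiction. Hence $p \circ f$ is continuous at no point, which is the first assertion.

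The argument is entirely routine and I do not expect any genuine obstacle; the only point requiring a moment's care is the remark that a non-constant polynomial has unbounded range, which is exactly what converts the surjectivity of $f$ into local unboundedness, and hence discontinuity, of the composition. This is precisely the fact quoted just before the statement from \cite{AGS,GMSS,GMS_add}, so in the write-up one may either invoke that citation directly or spell out the two-line argument above.
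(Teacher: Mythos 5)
Your proof is correct, and it is worth noting where it coincides with and departs from the paper's treatment. The Darboux half is exactly the paper's route: Proposition \ref{connected} applied with $\varphi = p$, since a polynomial is continuous. For the nowhere-continuity half, however, the paper offers no argument at all --- it simply points to the remark preceding the corollary, which cites \cite{AGS,GMSS,GMS_add} --- whereas you supply a complete two-line proof: for every non-degenerate interval $I$ one has $(p\circ f)(I) = p(f(I)) = p(\R)$, this set is unbounded because a non-constant polynomial satisfies $|p(t)|\to\infty$ as $|t|\to\infty$, and continuity of $p\circ f$ at a point $x_0$ would force boundedness on some neighbourhood of $x_0$, contradicting unboundedness on every interval. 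This argument is sound and makes the corollary self-contained, independent of the cited references. As a small bonus, your computation shows that $p\circ f$ sends every non-degenerate interval onto the \emph{unbounded interval} $p(\R)$, which is precisely the parenthetical observation the paper makes later when it remarks that the non-zero elements of the algebra $\mathcal B$ map connected sets to unbounded intervals; so your more explicit route recovers that refinement for free.
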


\begin{remark}
One could think that by means of Corollary \ref{connected_cor} we could easily obtain algebrability from the fact that the ring of polynomials in $\mathbb{R}$ is itself an algebra. Unfortunately this issue is more complicated than it might seem at first sight. The reason is that the ring of polynomials in $\mathbb{R}$ does not contain any infinitely generated algebra. Indeed, in $\mathbb{K} [x]$ ($\mathbb{K} = \mathbb{R}$ or $\mathbb{C}$), suppose  that $S$ is a subalgebra that contains $\mathbb{K}.$ Let $P$ be a non-constant polynomial in $S$ and let $T$ be the subalgebra of $S$ generated by $1$ and $P$. Then $x$ is algebraic over $T$, so $\mathbb{K}[x]$ is a finitely generated $T$-module. Note that $T$ is Noetherian and $S$ is a $T$-submodule of $\mathbb{K}[x]$, so $S$ is a finitely generated $T$-module. Hence $S$ is a finitely generated algebra. Obviously, this algebra has infinite dimension ($\omega$) as a linear space. For this reason we need a totally different argument to tackle this problem of algebrability.
\end{remark}

The following lemma, which is a slight variation of \cite{G}*{Lemma 2.1} shall be needed throughout this section (its proof follows the same idea as in that of \cite{G}*{Lemma 2.1}, and we spare the details of it to the interested reader).

\begin{lemma}
Let \(B\subset\mathbb R\) be a Bernstein set. There exists \(f:\mathbb R\to\mathbb R\) such that
\begin{enumerate}
\item \(f\vert_B\cap K\neq\varnothing\) for every closed set \(K \subset \mathbb R^2\) such that \(\dom (K)\) is uncountable, and
\item \(f(x)=0\) for every \(x\notin B\).
\end{enumerate}
\end{lemma}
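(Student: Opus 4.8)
The plan is to construct $f$ by transfinite recursion of length $\mathfrak c$, dealing with one ``target'' closed set at each stage while forcing $f$ to vanish off $B$. First I would set up the bookkeeping. The family of all closed sets $K\subset\mathbb R^2$ with $\dom(K)$ uncountable has cardinality $\mathfrak c$: a closed set is determined by its open complement, and there are only $\mathfrak c$ open subsets of $\mathbb R^2$ (each being a union of members of a fixed countable base). Fix an enumeration $\{K_\alpha:\alpha<\mathfrak c\}$ of this family.

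The key geometric input is that for each such $K_\alpha$ the projection $\dom(K_\alpha)$, being the continuous image of a closed (hence Polish) subset of $\mathbb R^2$, is analytic; an uncountable analytic set has the perfect-set property, so I may fix a perfect set $P_\alpha\subset\dom(K_\alpha)$. Since $B$ is a Bernstein set, $P_\alpha\cap B$ has cardinality $\mathfrak c$, exactly as recalled at the start of Section~3 (every perfect set contains $\mathfrak c$ many pairwise disjoint perfect subsets, each of which meets $B$).

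Now the recursion. Assume that for some $\alpha<\mathfrak c$ points $x_\beta$ with $\beta<\alpha$ have already been chosen. Because $|\{x_\beta:\beta<\alpha\}|\le|\alpha|<\mathfrak c$ while $|P_\alpha\cap B|=\mathfrak c$, I can pick $x_\alpha\in P_\alpha\cap B$ distinct from every previously chosen $x_\beta$. As $x_\alpha\in\dom(K_\alpha)$, there is some $y_\alpha$ with $(x_\alpha,y_\alpha)\in K_\alpha$; set $f(x_\alpha)=y_\alpha$. Once the recursion terminates I define $f(x)=0$ for every $x$ that is not of the form $x_\alpha$; since each chosen $x_\alpha$ lies in $B$, this in particular gives $f(x)=0$ for all $x\notin B$, so $f\colon\mathbb R\to\mathbb R$ is total and satisfies (ii). For (i), given any closed $K$ with $\dom(K)$ uncountable we have $K=K_\alpha$ for some $\alpha<\mathfrak c$, and by construction $x_\alpha\in B$ with $(x_\alpha,f(x_\alpha))=(x_\alpha,y_\alpha)\in K_\alpha=K$, whence $f\vert_B\cap K\neq\varnothing$.

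The only genuine content, and the step I expect to be the main obstacle, is the passage from ``$\dom(K_\alpha)$ uncountable'' to ``a perfect subset of $\dom(K_\alpha)$ meeting $B$ in a set of size $\mathfrak c$''; everything else is routine transfinite bookkeeping and the cardinality count. This step rests on the perfect-set property for analytic sets together with the Bernstein property, which is precisely where descriptive set theory enters. An alternative that sidesteps analytic sets would be to argue directly that a closed $K$ with uncountable projection already has a perfect projection, but invoking the perfect-set property of the analytic set $\dom(K_\alpha)$ is the cleanest route and mirrors the argument of \cite{G}*{Lemma 2.1}.
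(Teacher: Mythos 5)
Your proof is correct and takes essentially the same route as the paper: the paper spares the details, stating only that the proof ``follows the same idea as in that of \cite{G}*{Lemma 2.1}'', and that idea is precisely your transfinite recursion --- enumerate the $\mathfrak{c}$ closed sets $K\subset\mathbb R^2$ with uncountable projection, note that each such projection is uncountable analytic and hence contains a perfect set, use the Bernstein property (as recalled in Section 3) to get $\mathfrak{c}$ many points of $B$ inside that perfect set, pick fresh points $x_\alpha\in B$ with $(x_\alpha,y_\alpha)\in K_\alpha$, and set $f=0$ elsewhere. The steps you single out as the real content (the perfect-set property of the analytic projection and $|P\cap B|=\mathfrak{c}$) are exactly the ones the cited lemma rests on, so there is nothing to correct.
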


A classical result states that \(\mathbb R\) can be decomposed as the disjoint union of ``\(\mathfrak c\) many'' Bernstein sets. Thus, consider such a decomposition \(\mathbb R = \bigcup_{i\in\mathbb R} B_i\), where \(B_i\) is a
Bernstein set for every \(i\in\mathbb R\) and the family $\{B_i: i \in \mathbb{R} \}$ is disjoint. Now, use the previous
lemma to obtain, for every \(i\in\mathbb R\), a function \(f_i\colon\mathbb R\to\mathbb R\) such that \(f_i\vert_{B_i}\cap K\neq\varnothing\) for every closed set \(K\subset\mathbb R^2\) with \(\dom (K)\) uncountable. Assume, also, that \(f_i=0\) outside \(B_i\). Notice that (by construction)
    \begin{equation}\label{orthogonal}
    f_i\cdot f_j=0\quad\text{if}\quad i\neq j.
    \end{equation}
Now, let \(\mathcal B\) be the algebra generated by \(\{ f_i \,:\, i \in \mathbb R \}\), that is, by \eqref{orthogonal}, the linear span of the set \(\{\,f_i^p:i\in\mathbb R,p\in\mathbb N\,\}\).

At this point it is worth mentioning that, by considering the elements in the algebra  \(\mathcal B\), we have lost the surjectivity (and, thus, the everywhere surjectivity and the properties enjoyed by Jones functions) since, for instance, the square of any of the $f_i$'s is not even surjective. On the other hand, taking into account that $\jones \subset \es$ and Corollary \ref{connected_cor} together with that (by construction) the $f_i$'s have disjoint supports it can be seen that every non-zero element of \(\mathcal B\) is nowhere continuous and maps connected sets to connected sets (actually, to unbounded intervals!).

Clearly, the family \(\{\,f_i: i\in\mathbb R\,\}\) is a generator system of \(\mathcal B\). Let us now see that this generator system is actually  minimal. Suppose otherwise, then there would exist \(f_{i_j}\), \(j \in \{0,1,2,\dotsc,n \}\) (with \(i_j\neq i_k\) if \(j\neq k\)) and a polynomial \(P(x_1,x_2,\dotsc,x_n)\) such that
$$f_{i_0}=P(f_{i_1},f_{i_2},\dotsc,f_{i_n}).$$
By \eqref{orthogonal}, there exist polynomials \(p_1(x)\), \(p_2(x)\), \dots, \(p_n(x)\), such that
    $$f_{i_0}=P(f_{i_1},f_{i_2},\dotsc,f_{i_n})=p_1(f_{i_1})+p_2(f_{i_2})+\dotsb+p_n(f_{i_n}).$$
But, for every \(j=1,2,\dotsc,n\), we have that (by construction) \(p_j(f_{i_j})\) is a constant function on \(B_{i_0}\). This is a contradiction, since \(f_{i_0}\) is surjective on \(B_{i_0}\). Thus, we have proved the following result:

\begin{theorem}\label{3th}
The set of nowhere continuous Darboux functions is $\mathfrak{c}$-algebrable.
\end{theorem}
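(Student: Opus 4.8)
The plan is to produce a minimal system of $\mathfrak c$ generators whose generated algebra $\mathcal B$ avoids continuity at every point (apart from $0$) while preserving the Darboux property. The tempting route is purely compositional: by Corollary \ref{connected_cor}, whenever $f\in\es$ and $p$ is a non-constant real polynomial, $p\circ f$ is already nowhere continuous and Darboux. However, as the Remark explains, the polynomial ring carries no infinitely generated subalgebra, so composing a single everywhere surjective function with polynomials cannot by itself yield $\mathfrak c$ algebraically independent generators. The device that breaks this barrier is \emph{disjoint supports}: if the generators are pairwise ``orthogonal'' in the sense of \eqref{orthogonal}, then every product of two distinct generators vanishes, and the algebra they generate collapses to the linear span of the pure powers $\{f_i^p\}$, which is far easier to control.

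Concretely, I would first invoke the classical decomposition $\mathbb R=\bigcup_{i\in\mathbb R}B_i$ into $\mathfrak c$ pairwise disjoint Bernstein sets, and then apply the lemma preceding the theorem to each block to obtain, for every $i\in\mathbb R$, a function $f_i$ that is Jones-like on $B_i$ (it meets every closed $K\subset\mathbb R^2$ with uncountable projection through $B_i$) and identically $0$ off $B_i$. Disjointness of the supports gives $f_if_j=0$ for $i\neq j$, so $\mathcal B$ is exactly the span of $\{f_i^p:i\in\mathbb R,\ p\in\mathbb N\}$ and every non-zero $g\in\mathcal B$ has the block form $g|_{B_i}=p_i(f_i)$ for a polynomial $p_i$ with no constant term, vanishing for all but finitely many $i$.

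Next I would verify that each such $g$ is nowhere continuous and Darboux, which I expect to be the crux. Fix a non-degenerate interval $I$. For each index $i$ with $p_i\neq 0$, the intersection $I\cap B_i$ still has uncountable projection, so the Jones property of $f_i$ forces $f_i$ to be surjective onto $\mathbb R$ on $I\cap B_i$; consequently $g(I\cap B_i)=p_i(\mathbb R)$ is an \emph{unbounded} interval, and it contains $0=p_i(0)$. On the remaining blocks $g$ is $0$. Hence $g(I)$ is a union of unbounded intervals all sharing the point $0$, which is itself an interval (in fact unbounded), so $g$ maps every connected set to a connected set; the same unboundedness on arbitrarily small $I$ rules out continuity at any point. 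This is precisely the place where $\jones\subset\es$, Corollary \ref{connected_cor}, and the disjoint-support identity \eqref{orthogonal} must be combined.

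Finally, I would establish that $\{f_i:i\in\mathbb R\}$ is a \emph{minimal} generating set, which delivers $\mathfrak c$-algebrability. Suppose toward a contradiction that $f_{i_0}=P(f_{i_1},\dots,f_{i_n})$ with the $i_j$ distinct from $i_0$. By \eqref{orthogonal} the right-hand side splits as $p_1(f_{i_1})+\dots+p_n(f_{i_n})$, and restricted to $B_{i_0}$ every $f_{i_j}$ with $j\geq1$ vanishes, so the right-hand side is constant on $B_{i_0}$. This contradicts the fact that $f_{i_0}$ is surjective, hence non-constant, on $B_{i_0}$. The main obstacle throughout is not any single estimate but the simultaneous balancing act: the supports must be disjoint enough to linearize products and to force minimality, yet each $f_i$ must individually retain the strong surjectivity needed to keep every algebra element Darboux and nowhere continuous.
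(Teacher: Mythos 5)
Your proposal is correct and follows essentially the same route as the paper's own proof: the same decomposition of $\mathbb{R}$ into $\mathfrak{c}$ pairwise disjoint Bernstein sets, the same Jones-type lemma producing generators $f_i$ supported on the blocks $B_i$, the same orthogonality relation $f_i f_j=0$ reducing the algebra to the span of pure powers, and the same minimality argument via surjectivity of $f_{i_0}$ on $B_{i_0}$. If anything, you give slightly more detail than the paper at the step where non-zero elements are shown to be nowhere continuous and Darboux (the image being a union of unbounded intervals all containing $0$), which the paper leaves to the reader.
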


\noindent {\bf Acknowledgements.} The authors wish to thank Prof. Dr. J. L. G\'amez-Merino for fruitful conversations and comments on this paper.

\begin{bibdiv}
\begin{biblist}

\bib{ACPS}{article}{
   author={Aron, Richard M.},
   author={Conejero, Jos{\'e} A.},
   author={Peris, Alfredo},
   author={Seoane-Sep{\'u}lveda, Juan B.},
   title={Uncountably generated algebras of everywhere surjective functions},
   journal={Bull. Belg. Math. Soc. Simon Stevin},
   volume={17},
   date={2010},
   number={3},
   pages={571--575},
}

\bib{AGM}{article}{
    author={Aron, R. M.},
    author={Garc{\'{\i}}a, D.},
    author={Maestre, M.},
     TITLE = {Linearity in non-linear problems},
   JOURNAL = {RACSAM Rev. R. Acad. Cienc. Exactas F\'{\i}s. Nat. Ser. A Mat.},
    VOLUME = {95},
      YEAR = {2001},
    NUMBER = {1},
     PAGES = {7--12},
}

\bib{topology}{article}{
   author={Aron, R. M.},
   author={Garc\'{i}a-Pacheco, F. J.},
   author={P\'{e}rez-Garc\'{i}a, D.},
   author={Seoane-Sep\'{u}lveda, J. B.},
   title={On dense-lineability of sets of functions on $\mathbb{R}$},
   journal={Topology},
   volume={48},
   date={2009},
   pages={149--156},
}

\bib{AGS}{article}{
   author={Aron, R. M.},
   author={Gurariy, V. I.},
   author={Seoane-Sep\'{u}lveda, J. B.},
   title={Lineability and spaceability of sets of functions on $\Bbb R$},
   journal={Proc. Amer. Math. Soc.},
   volume={133},
   date={2005},
   number={3},
   pages={795--803},
   issn={0002-9939},
}

\bib{APS}{article}{
    AUTHOR = {Aron, R. M.}
    author={P{\'e}rez-Garc{\'{\i}}a, D.},
    author={Seoane-Sep{\'u}lveda, J. B.},
     TITLE = {Algebrability of the set of non-convergent {F}ourier series},
   JOURNAL = {Studia Math.},
  FJOURNAL = {Studia Mathematica},
    VOLUME = {175},
      YEAR = {2006},
    NUMBER = {1},
     PAGES = {83--90},
}
	
\bib{AS}{article}{
   author={Aron, R. M.},
   author={Seoane-Sep\'{u}lveda, J. B.},
   title={Algebrability of the set of everywhere surjective functions on
   $\Bbb C$},
   journal={Bull. Belg. Math. Soc. Simon Stevin},
   volume={14},
   date={2007},
   number={1},
   pages={25--31},
   issn={1370-1444},
}

\bib{BF}{article}{
   author={Balcar, B.},
   author={Fran{\v{e}}k, F.},
   title={Independent families in complete Boolean algebras},
   journal={Trans. Amer. Math. Soc.},
   volume={274},
   date={1982},
   number={2},
   pages={607--618},
}

\bib{X3}{article}{
   author={Balcerzak, Marek},
   author={Ciesielski, Krzysztof},
   author={Natkaniec, Tomasz},
   title={Sierpi\'nski-Zygmund functions that are Darboux, almost
   continuous, or have a perfect road},
   journal={Arch. Math. Logic},
   volume={37},
   date={1997},
   number={1},
   pages={29--35},
}

\bib{BG1}{article}{
   author={Bartoszewicz, A.},
   author={G\l \c ab, S.},
   title={Strong algebrability of sets of sequences and functions},
   journal={Proc. Amer. Math. Soc.},
   status={In Press},
}

\bib{BG2}{article}{
   author={Bartoszewicz, A.},
   author={G\l \c ab, S.},
   title={Algebrability of conditionally convergent series with Cauchy product},
   journal={J. Math. Anal. Appl.},
   volume={385},
   date={2012},
   pages={693--697},
}

\bib{BGP_LAA}{article}{
   author={Bartoszewicz, A.},
   author={G\l \c ab, S.},
   author={Poreda, T.},
   title={On algebrability of nonabsolutely convergent series},
   journal={Linear Algebra Appl.},
   volume={435},
   year={2011},
   pages={1025--1028},
}

\bib{BQ}{article}{
    AUTHOR = {Bayart, F.},
    author = {Quarta, L.},
     TITLE = {Algebras in sets of queer functions},
   JOURNAL = {Israel J. Math.},
    VOLUME = {158},
      YEAR = {2007},
     PAGES = {285--296},
      ISSN = {0021-2172},
}

\bib{B}{article}{
   author={Bernal-Gonz\'{a}lez, L.},
   title={Dense-lineability in spaces of continuous functions},
   journal={Proc. Amer. Math. Soc.},
   volume={136},
   date={2008},
   number={9},
   pages={3163--3169},
   issn={0002-9939},
}

\bib{Blumberg}{article}{
   author={Blumberg, H.},
   title={New properties of all real functions},
   journal={Trans. Amer. Math. Soc.},
   volume={82},
   date={1922},
   pages={53--61},
   isbn={3-540-16474-X},
}

\bib{BDP}{article}{
   author={Botelho, G.},
   author={Diniz, D.},
   author={Pellegrino, D.},
   title={Lineability of the set of bounded linear non-absolutely summing
   operators},
   journal={J. Math. Anal. Appl.},
   volume={357},
   date={2009},
   number={1},
   pages={171--175},
}

\bib{BMP}{article}{
     author={Botelho, G.},
     author={Matos, M.},
     author={Pellegrino, D.},
     title={Lineability of summing sets of homogeneous polynomials},
    journal={Linear Multilinear Algebra},
   volume={58},
   date={2010},
   number={1-2},
   pages={61--74},
}

 \bib{X1}{article}{
   author={Ciesielski, Krzysztof},
   author={Natkaniec, Tomasz},
   title={Algebraic properties of the class of Sierpi\'nski-Zygmund
   functions},
   journal={Topology Appl.},
   volume={79},
   date={1997},
   number={1},
   pages={75--99},
}

\bib{X4}{article}{
   author={Ciesielski, Krzysztof},
   author={Natkaniec, Tomasz},
   title={On Sierpi\'nski-Zygmund bijections and their inverses},
   journal={Topology Proc.},
   volume={22},
   date={1997},
   number={Spring},
   pages={155--164},
}

\bib{almostdisjoint1}{article}{
   author={Erd{\H{o}}s, P.},
   author={Hajnal, A.},
   author={Milner, E. C.},
   title={On sets of almost disjoint subsets of a set},
   journal={Acta Math. Acad. Sci. Hungar},
   volume={19},
   date={1968},
   pages={209--218},
}

\bib{Foran}{book}{
   author={Foran, James},
   title={Fundamentals of real analysis},
   series={Monographs and Textbooks in Pure and Applied Mathematics},
   volume={144},
   publisher={Marcel Dekker Inc.},
   place={New York},
   date={1991},
   pages={xiv+473},
   isbn={0-8247-8453-7},
}

\bib{G}{article}{
   author={G\'{a}mez-Merino, J. L.},
   TITLE = {Large algebraic structures inside the set of surjective functions},
   journal={Bull. Belg. Math. Soc. Simon Stevin},
   volume={18},
   date={2011},
   pages={297--300},
}

\bib{GMPS_LAA}{article}{
   author={G\'{a}mez-Merino, J. L.},
   author={Mu\~{n}oz-Fern\'{a}ndez, G. A.},
   author={Pellegrino, D.},
   author={Seoane-Sep{\'u}lveda, J. B.},
   title={Bounded and unbounded polynomials and multilinear forms: Characterizing continuity},
   journal={Linear Algebra Appl.},
   doi={10.1016/j.laa.2011.06.050},
}

\bib{GMPS_preprint}{article}{
   author={G\'{a}mez-Merino, J. L.},
   author={Mu\~{n}oz-Fern\'{a}ndez, G. A.},
   author={Pellegrino, D.},
   author={Seoane-Sep{\'u}lveda, J. B.},
   title={Lineability and algebrability in subsets of complex functions},
   status={Preprint},
}

\bib{GMSS}{article}{
    author={G\'{a}mez-Merino, J. L.},
    author={Mu\~{n}oz-Fern\'{a}ndez, G. A.},
    author={S\'{a}nchez, V. M.},
    author={Seoane-Sep\'{u}lveda, J. B.},
    title = {Sierpi\'nski-Zygmund functions and other problems on lineability},
    journal = {Proc. Amer. Math. Soc.},
    volume={138},
    date={2010},
    number={11},
    pages={3863--3876},
}

\bib{GMS_add}{article}{
   author={G\'{a}mez-Merino, Jos{\'e} L.},
   author={Mu{\~n}oz-Fern{\'a}ndez, Gustavo A.},
   author={Seoane-Sep{\'u}lveda, Juan B.},
   title={Lineability and additivity in $\Bbb R^{\Bbb R}$},
   journal={J. Math. Anal. Appl.},
   volume={369},
   date={2010},
   number={1},
   pages={265--272},
}

\bib{monthly}{article}{
author={G\'{a}mez-Merino, J. L.},
author={Mu\~{n}oz-Fern\'{a}ndez, G. A.},
author={Seoane-Sep\'{u}lveda, J. B.},
title={A characterization of continuity revisited},
journal={Amer. Math. Monthly},
VOLUME = {118},
YEAR = {2011},
NUMBER = {2},
PAGES = {167--170},
}

\bib{GGMS_MN}{article}{
   author={Garc{\'{\i}}a, D.},
   author={Grecu, B. C.},
   author={Maestre, M.},
   author={Seoane-Sep{\'u}lveda, J. B.},
   title={Infinite dimensional Banach spaces of functions with nonlinear
   properties},
   journal={Math. Nachr.},
   volume={283},
   date={2010},
   number={5},
   pages={712--720},
}

\bib{Taiwan}{article}{
   author={Garc{\'{\i}}a-Pacheco, F. J.},
   author={Mart{\'{\i}}n, M.},
   author={Seoane-Sep{\'u}lveda, J. B.},
   title={Lineability, spaceability, and algebrability of certain subsets of
   function spaces},
   journal={Taiwanese J. Math.},
   volume={13},
   date={2009},
   number={4},
   pages={1257--1269},
}

\bib{Gel-Olm}{book}{
   author={Gelbaum, Bernard R.},
   author={Olmsted, John M. H.},
   title={Counterexamples in analysis},
   series={The Mathesis Series},
   publisher={Holden-Day Inc.},
   place={San Francisco},
   date={1964},
   pages={xxiv+194},
}

\bib{G1}{article}{
   author={Gurariy, V. I.},
   title={Subspaces and bases in spaces of continuous functions (Russian)},
   journal={Dokl. Akad. Nauk SSSR},
   volume={167},
   date={1966},
   pages={971--973},
}

\bib{G2}{article}{
   author={Gurariy, V. I.},
   title={Linear spaces composed of nondifferentiable functions},
   journal={C.R. Acad. Bulgare Sci.},
   volume={44},
   date={1991},
   pages={13--16},
}

\bib{H}{article}{
   author={Hencl, Stanislav},
   title={Isometrical embeddings of separable Banach spaces into the set of
   nowhere approximatively differentiable and nowhere H\"older functions},
   journal={Proc. Amer. Math. Soc.},
   volume={128},
   date={2000},
   number={12},
   pages={3505--3511},
}

\bib{jones}{article}{
   author={Jones, F. B.},
   title={Connected and disconnected plane sets and the functional equation
   \(f(x)+f(y)=f(x+y)\)},
   journal={Bull. Amer. Math. Soc.},
   volume={48},
   date={1942},
   pages={115--120},
   issn={0002-9904},
}

\bib{Kharazishvili}{book}{
   author={Kharazishvili, A. B.},
   title={Strange functions in real analysis},
   series={Pure and Applied Mathematics},
   volume={272},
   edition={2},
   publisher={Chapman \& Hall/CRC},
   place={Boca Raton, Florida},
   date={2006},
   pages={xii+415},
}

\bib{K}{book}{
   author={Kunen, Kenneth},
   title={Set theory},
   series={Studies in Logic and the Foundations of Mathematics},
   volume={102},
   note={An introduction to independence proofs;
   Reprint of the 1980 original},
   publisher={North-Holland Publishing Co.},
   place={Amsterdam},
   date={1983},
   pages={xvi+313},
}

\bib{Lebesgue}{book}{
   author={Lebesgue, H.},
   title={Le\c{c}ons sur l'int\'{e}gration et la recherche des fonctions primitives},
   publisher={ChaGauthier-Willars},
   date={1904},
}

\bib{MPPS_LAA}{article}{
   author={Mu{\~n}oz-Fern{\'a}ndez, G. A.},
   author={Palmberg, N.},
   author={Puglisi, D.},
   author={Seoane-Sep{\'u}lveda, J. B.},
   title={Lineability in subsets of measure and function spaces},
   journal={Linear Algebra Appl.},
   volume={428},
   date={2008},
   number={11-12},
   pages={2805--2812},
}

\bib{X2}{article}{
   author={P{\l}otka, Krzysztof},
   title={Sum of Sierpi\'nski-Zygmund and Darboux like functions},
   journal={Topology Appl.},
   volume={122},
   date={2002},
   number={3},
   pages={547--564},
}

\bib{shinoda}{article}{
   author={Shinoda, Juichi},
   title={Some consequences of Martin's axiom and the negation of the
   continuum hypothesis},
   journal={Nagoya Math.~J.},
   volume={49},
   date={1973},
   pages={117--125},
   issn={0027-7630},
}

\bib{juanksu}{book}{
   author={Seoane-Sep\'{u}lveda, Juan B.},
   title={Chaos and lineability of pathological phenomena in analysis},
   note={Thesis (Ph.D.)--Kent State University},
   publisher={ProQuest LLC, Ann Arbor, MI},
   date={2006},
   pages={139},
   isbn={978-0542-78798-0},
}

\bib{SZ}{article}{
   author={Sierpi\'nski, W.},
   author={Zygmund, A.},
   title={Sur une fonction qui est discontinue sur tout ensemble de puissance du continu},
   journal={Fund. Math.},
   volume={4},
   date={1923},
   pages={316--318},
}

\end{biblist}
\end{bibdiv}

\end{document}